\renewcommand*{\@thmcounterend}{}
\spnewtheorem{assumption}{Assumption}{\bfseries}{\itshape}
\definecolor{markercolor}{RGB}{124.9, 255, 160.65}
\renewcommand{\hat}{\widehat}
\renewcommand{\tilde}{\widetilde}
\newcommand*\diff[1]{\mathop{}\!{\mathrm{d}#1}}
\newcommand{\diag}[1]{{\rm diag}\LRp{#1}}
\newcommand{\td}[2]{\frac{{\rm d}#1}{{\rm d}{\rm #2}}}
\newcommand{\pd}[2]{\frac{\partial#1}{\partial#2}}
\newcommand{\nor}[1]{\left\| #1 \right\|}
\newcommand{\LRp}[1]{\left( #1 \right)}
\newcommand{\LRs}[1]{\left[ #1 \right]}
\newcommand{\LRb}[1]{\left| #1 \right|}
\newcommand{\LRc}[1]{\left\{ #1 \right\}}
\newcommand{\Grad} {\ensuremath{\nabla}}
\newcommand{\avg}[1] {\ensuremath{\LRc{\!\{#1\}\!}}}
\renewcommand{\note}[1]{#1}
\newcommand{\bnote}[1]{#1}
\newcommand{\rnote}[1]{#1}
\date{}
\author{Jesse Chan}
\title{Skew-symmetric entropy stable modal discontinuous Galerkin formulations}
\titlerunning{Skew-symmetric entropy stable DG formulations}
\begin{document}

\institute{J. Chan \at 
Department of Computational and Applied Mathematics\\
Rice University\\
              6100 Main Street\\ 
              Houston, TX, 77005 \\
              \email{jesse.chan@rice.edu}
}

\maketitle

\begin{abstract}
High order entropy stable discontinuous Galerkin (DG) methods for nonlinear conservation laws satisfy an inherent discrete entropy inequality.  The construction of such schemes has relied on the use of carefully chosen nodal points \cite{gassner2013skew, fisher2013high, carpenter2014entropy, crean2018entropy, chan2018efficient} or volume and surface quadrature rules \cite{chan2017discretely, chan2018discretely} to produce operators which satisfy a summation-by-parts (SBP) property.  In this work, we show how to construct ``modal'' DG formulations which are \bnote{entropy stable for volume and surface quadratures under which the SBP property in \cite{chan2017discretely} does not hold.  These formulations rely on an alternative skew-symmetric construction of operators which automatically satisfy the SBP property.   Entropy stability then follows for choices of volume and surface quadrature which satisfy sufficient accuracy conditions.}   \rnote{The accuracy of these new SBP operators depends on a separate set of conditions on quadrature accuracy, with design order accuracy recovered under the usual assumptions of degree $2N-1$ volume quadratures and degree $2N$ surface quadratures. }
We conclude with numerical experiments verifying the accuracy and stability of the proposed formulations, and discuss an application of these formulations for entropy stable DG schemes on mixed quadrilateral-triangle meshes.  
\end{abstract}


\section{Introduction}

High order methods for the simulation of time-dependent compressible flow have the potential to achieve higher levels of accuracy at lower costs compared to current low order schemes \cite{wang2013high}.  In addition to superior accuracy, the low numerical dispersion and dissipation of high order methods \cite{ainsworth2004dispersive} enables the accurate propagation of waves over long distances and time scales.  The same properties also make high order methods attractive for unsteady phenomena such as vorticular and turbulent flows, which are sensitive to numerical dissipation \cite{visbal1999high, wang2013high}.  

However, when applied to nonlinear conservation laws, high order methods can experience artificial growth and blow-up near under-resolved features such as shocks or turbulence.  In practice, the application of high order methods to practical problems requires shock capturing and stabilization techniques (such as artificial viscosity) or solution regularization (such as filtering or limiting) to prevent solution blow-up.  The resulting schemes for nonlinear conservation laws walk a fine line between stability, robustness, and accuracy.  Aggressive stabilization or regularization can result in the loss of high order accuracy, while too little can result in instability \cite{wang2013high}.  Moreover, it can be difficult to determine robust expressions for stabilization paramaters, as parameters which work for one simulation can fail when applied to a different physical regime or discretization setting.  

These issues have motivated the introduction of high order \textit{entropy stable} discretizations, which satisfy a semi-discrete entropy inequality while maintaining high order accuracy in smooth regions.  Proofs of continuous entropy inequalities rely on the chain rule, which does not hold discretely due to effects such as quadrature error.   Entropy stable schemes \rnote{were originally introduced in the context of finite volume methods} \cite{tadmor1987numerical, tadmor2003entropy, fjordholm2012arbitrarily,chandrashekar2013kinetic, tadmor2016entropy, ray2016entropy}.  They were then extended to high order collocation DG methods on tensor product elements in \cite{fisher2013high, carpenter2014entropy, gassner2016split, gassner2017br1} and to simplicial elements in \cite{crean2017high, chen2017entropy, crean2018entropy, chan2017discretely, chan2018discretely}.  \rnote{These extensions combine summation-by-parts (SBP) differentiation operators, which satisfy a matrix analogue of integration by parts, with ``flux differencing'' for the discretization of nonlinear convective terms.  Together, these techniques circumvents the loss of the chain rule while preserving a semi-discrete analogue of the continuous entropy inequality.}
Entropy stable methods have also been extended to a variety of other discretization settings, including staggered grids \cite{parsani2016entropy, fernandez2019staggered}, Gauss-Legendre collocation \cite{chan2018efficient}, and non-conforming meshes \cite{friedrich2017entropy}.  

Entropy stable ``modal'' DG discretizations \cite{chan2017discretely, chan2018discretely} are built upon flux differencing and the SBP property.  However, the SBP property does not hold for certain under-integrated quadrature rules, which arise naturally in some discretization settings.  For example, on hybrid meshes consisting of both quadrilateral and triangular elements, it is convenient to utilize the same quadrature rule on shared faces between different element types.  On degree $N$ quadrilateral elements, a popular choice of quadrature is an $(N+1)$-point Gauss-Legendre-Lobatto (GLL) rule.  When both volume and surface integrals are approximated using $(N+1)$ point GLL quadrature rules, the SBP property holds, despite the fact that GLL quadrature is inexact for the integrands which appear in finite element formulations \cite{fisher2013high}.  However, while GLL quadrature induces an SBP property on quadrilateral elements, it does not guarantee an SBP property if used on triangular elements \cite{chan2017discretely}.  

This work proposes an alternative formulation which utilizes a \note{skew-symmetric construction of the SBP operator which satisfies the SBP property by construction}.  Under such a formulation, the proof of entropy stability \note{holds under weaker quadrature rules} compared to the SBP property introduced in \cite{chan2017discretely, chan2018discretely}.  We show that this skew-symmetric formulation is entropy stable, locally conservative, and free-stream preserving on curved elements, and confirm theoretical results with numerical experiments on hybrid triangular-quadrilateral meshes.  

It should be noted that a similar approach to entropy stable discretizations was introduced within a finite difference framework \cite{chen2017entropy, crean2018entropy} using multidimensional differencing operators which satisfy similar accuracy conditions and an SBP property \cite{hicken2016multidimensional}.  These operators exist for nodal points corresponding to sufficiently accurate choices of volume and surface quadrature, but do not correspond to any specific basis or approximation space.  The formulations in \cite{chen2017entropy, crean2018entropy} differ from the ones presented in this work in that they are based on SBP finite differences and ``nodal'' (rather than ``modal'') DG formulations, with differentiation operators computed algebraically or through an optimization problem for each specific choice of nodes.  In contrast, ``modal'' formulations induce quadrature-based operators from an explicit approximation space, and accomodate general choices of volume and surface quadrature (e.g.\ volume quadratures without boundary nodes and over-integrated quadrature rules).  

The structure of the paper is as follows: Section~\ref{sec:nonlin} describes the continuous entropy inequality which we aim to replicate discretely.  Section~\ref{sec:approx} and Section~\ref{sec:sbp} introduce polynomial approximation spaces and quadrature-based SBP operators on simplicial and tensor product elements.  
Section~\ref{sec:skew1} introduces \note{an alternative skew-symmetric construction of SBP operators and describes how to construct entropy stable formulations on a reference element.  Connections between the accuracy of the new skew-symmetric SBP operators and quadrature accuracy are also discussed.}  Section~\ref{sec:skew2} extends the skew-symmetric formulation to curved elements, and provides explicit conditions for entropy stability in terms of quadrature accuracy and the polynomial degree of geometric mappings.  Section~\ref{sec:num} concludes by presenting numerical experiments which verify the theoretical assumptions, stability, and accuracy of the proposed formulations.


\section{Entropy stability for systems of nonlinear conservation laws}
\label{sec:nonlin} 

We begin by reviewing the dissipation of entropy for a $d$-dimensional system of nonlinear conservation laws on a domain $\Omega$
\begin{equation}
\pd{\rnote{\bm{u}}}{t}  + \sum_{j=1}^d\pd{\bm{f}_j(\bm{u})}{x_j} = \bm{0}, \qquad \bm{u}\in \mathbb{R}^n, \qquad \bm{f}:\mathbb{R}^n\rightarrow\mathbb{R}^n,
\label{eq:nonlineqs}
\end{equation}
where $\bm{u}$ are the conservative variables and $\bm{f}(\bm{u})$ is a vector-valued nonlinear flux function.  We are interested in nonlinear conservation laws for which a convex entropy function $U(\bm{u})$ exists.  For such systems, the  \emph{entropy variables} are an invertible mapping $\bm{v}(\bm{u}):\mathbb{R}^n\rightarrow \mathbb{R}^n$ defined as the derivative of the entropy function with respect to the conservative variables 
\begin{align}
\bm{v}(\bm{u}) = \pd{U}{\bm{u}}.
\label{eq:entropyvarsmap}
\end{align}
Several widely used equations in fluid modeling (Burgers, shallow water, compressible Euler and Navier-Stokes equations) yield convex entropy functions $U(\bm{u})$ \cite{hughes1986new, chen2017entropy}.  Let $\partial \Omega$ be the boundary of $\Omega$ with outward unit normal $\bm{n}$.  By multiplying the equation (\ref{eq:nonlineqs}) with $\bm{v}(\bm{u})^T$, the solutions $\bm{u}$ of (\ref{eq:nonlineqs}) can be shown to satisfy an entropy inequality
\begin{equation}
\int_{\Omega}\pd{U(\bm{u})}{t}\diff{x} + \int_{\partial \Omega} \sum_{j=1}^d \LRp{\bm{v}(\bm{u})^T\bm{f}_j(\bm{u}) - \psi_j\LRp{\bm{v}(\bm{u})}}n_j \diff{x} \leq 0, 
\label{eq:entropyineq}
\end{equation}
where $\bm{n} = \LRp{n_1,\ldots,n_d}$ denotes the outward unit normal, and $\psi_j(\bm{u})$ is some function referred to as the entropy potential.  

The proof of (\ref{eq:entropyineq}) requires the use of the chain rule \cite{mock1980systems, harten1983symmetric, dafermos2005compensated}.  The instability-in-practice of high order schemes for (\ref{eq:nonlineqs}) can be attributed in part to the fact that the discrete form of the equations do not satisfy the chain rule, and thus do not satisfy (\ref{eq:entropyineq}).  As a result, discretizations of (\ref{eq:nonlineqs}) do not typically possess an underlying statement of stability.  This can be offset in practice by the numerical dissipation inherent in lower order schemes.  However, because high order discretizations possess low numerical dissipation, the lack of an underlying discrete stability has contributed to the perception that high order methods are inherently less stable than low order methods.

\section{Polynomial approximation spaces}
\label{sec:approx}

In this work, we consider either simplicial reference elements (triangles and tetrahedra) or tensor product reference elements (quadrilaterals and hexahedra).  We define an approximation space using degree $N$ polynomials on the reference element; however, the natural polynomial approximation space differs depending on the element type \cite{chan2015gpu}.  On a $d$-dimensional reference simplex, the natural polynomial space consists of total degree $N$ polynomials 
\[
P^N\LRp{\hat{D}} = \LRc{\hat{x}_1^{i_1}\ldots\hat{x}_d^{i_d}, \quad \hat{\bm{x}} \in \hat{D}, \quad 0\leq \sum_{k=1}^d i_k \leq N}.
\]
In contrast, the natural polynomial space on a $d$-dimensional tensor product element is the space of maximum degree $N$ polynomials
\[
Q^N\LRp{\hat{D}} = \LRc{\hat{x}_1^{i_1}\ldots\hat{x}_d^{i_d}, \quad \hat{\bm{x}} \in \hat{D}, \quad 0\leq i_k \leq N, \quad k = 1,\ldots, d}.
\]
We denote the natural approximation space on a given reference element $\hat{D}$ by $V^N$.  Furthermore, we denote the dimension of $V^N$ as $N_p = {\rm dim}\LRp{V^N\LRp{\hat{D}}}$.  

The proofs presented in this work will also refer to anisotropic tensor product polynomial spaces, where the maximum polynomial degree varies depending on the coordinate direction.  We denote such spaces by $Q^{N_1, \ldots, N_d}$, where $N_k$ are non-negative integers and
\[
Q^{N_1, N_2, \ldots, N_d}\LRp{\hat{D}} = \LRc{\hat{x}_1^{i_1}\ldots\hat{x}_d^{i_d}, \quad \hat{\bm{x}} \in \hat{D}, \quad 0\leq i_k \leq N_k, \quad k = 1,\ldots, d}.
\]
For example, the isotropic tensor product space $Q^N$ is the same as $Q^{N,\ldots,N}$.

We also define trace spaces for each reference element.  Let $\hat{f}$ be a face of the reference element $\hat{D}$.  The trace space $V^N \LRp{\hat{f}}$ is defined as the restrictions of functions in $V^N\LRp{\hat{D}}$ to $\hat{f}$, and denote the dimension of the trace space as ${\rm dim}\LRp{V^N\LRp{{\hat{f}}}} = N^f_p$.  
\[
V^N \LRp{\hat{f}} = \LRc{ \left.u\right|_{\hat{f}}, \quad u \in V^N\LRp{\hat{D}}, \quad \hat{f}\in \partial\hat{D}}.
\]
For example, on a $d$-dimensional simplex, $V^N \LRp{\partial \hat{D}}$ consists of total degree $N$ polynomials on simplices of dimension $(d-1)$.  On a $d$-dimensional tensor product element, $V^N \LRp{\partial \hat{D}}$ consists of maximum degree $N$ polynomials on a tensor product element of dimension $(d-1)$.  



\section{Quadrature-based matrices and ``hybridized'' SBP operators}
\label{sec:sbp}
Let $\hat{D} \subset\mathbb{R}^d$ denote a reference element with surface $\partial \hat{D}$.  
The high order schemes in \cite{chan2017discretely, chan2018discretely} begin by approximating the solution in a degree $N$ polynomial basis $\LRc{\phi_j(\hat{\bm{x}})}_{\rnote{j}=1}^{N_p}$ on $\hat{D}$.  These schemes also assume volume and surface quadrature rules $(\hat{\bm{x}}_i, w_i)$, $\LRp{\hat{\bm{x}}^f_i,w^f_i}$ on $\hat{D}$.  We will specify the accuracy of each quadrature rule later, and discuss how quadrature accuracy implies specific operator properties.  

Let $\bm{V}_q,\bm{V}_f$ denote interpolation matrices, and let $\bm{D}^i$ be the differentiation matrix with respect to the $i$th coordinate such that
\begin{gather}
\LRp{\bm{V}_q}_{ij} = \phi_j(\hat{\bm{x}}_i), \qquad \LRp{\bm{V}_f}_{ij} = \phi_j(\hat{\bm{x}}^f_i), \qquad \pd{\phi_j(\hat{\bm{x}})}{\hat{x}_i} = \sum_{k=1}^{N_p} \LRp{\bm{D}^i_{jk}} \phi_k(\hat{\bm{x}}).
\end{gather}
The interpolation matrices $\bm{V}_q,\bm{V}_f$ map basis coefficients to evaluations at volume and surface quadrature points respectively, while the differentiation matrix ${\bm{D}}_i$ maps basis coefficients of a function to the basis coefficients of its derivative with respect to $\hat{x}_k$.  The interpolation matrices are used to assemble the mass matrix $\bm{M}$, the quadrature-based projection matrix $\bm{P}_q$, and lifting matrix $\bm{L}_f$
\begin{gather}
\bm{M} = \bm{V}_q^T\bm{W}\bm{V}_q, \qquad \bm{P}_q = \bm{M}^{-1}\bm{V}_q^T\bm{W}, \qquad \bm{L}_f = \bm{M}^{-1}\bm{V}_f^T\bm{W}_f,
\end{gather}
where $\bm{W}, \bm{W}_f$ are diagonal matrices of volume and surface quadrature weights, respectively.  \bnote{We have also assumed that the volume quadrature is sufficiently accurate such that the mass matrix $\bm{M}$ is positive-definite and invertible.} The matrix $\bm{P}_q$ is a quadrature-based discretization of the $L^2$ projection operator $\Pi_N$ onto degree $N$ polynomials, which is given as follows: find $\Pi_N u \in V^N$ such that
\begin{equation}
\int_{\hat{D}} \Pi_N u v = \int_{\hat{D}} u v, \qquad \forall v \in V^N.
\label{eq:l2proj}
\end{equation}

Interpolation, differentiation, and $L^2$ projection matrices can be combined to construct finite difference operators.  For example, the matrix $\bm{D}^i_q = \bm{V}_q\bm{D}^i\bm{P}_q$ maps function values at quadrature points to approximate values of the derivative at quadrature points.  By choosing specific quadrature rules, $\bm{D}^i_q$ recovers high order summation-by-parts finite difference operators in \cite{gassner2013skew, fernandez2014generalized, ranocha2018generalised} and certain operators in \cite{hicken2016multidimensional}.  However, to address difficulties in designing efficient entropy stable interface terms for nonlinear conservation laws, a new ``hybridized'' summation by parts matrix was introduced in \cite{chan2017discretely} which builds interface terms directly into the approximation of the derivative.\footnote{\note{The term ``hybridized'' SBP operator was introduced in the review paper \cite{chenreview}.  These operators were originally referred to as ``decoupled'' SBP operators in \cite{chan2017discretely})}.}  

Let $\hat{\bm{n}}$ denote the scaled outward normal vector $\hat{\bm{n}} = \LRc{\hat{n}_1\hat{J}_f,\ldots,\hat{n}_d\hat{J}_f}$, where $\hat{J}_f$ is the determinant of the Jacobian of the mapping of a face of $\partial \hat{D}$ to a reference face.  Let $\hat{\bm{n}}_i$ denote the vector containing values of the $i$th component $\hat{n}_i\hat{J}_f$ at all surface quadrature points, and define the generalized SBP operator 
\[
\bm{Q}^i = \bm{W}\bm{D}^i_q = \bm{W}\bm{V}_q\bm{D}^i\bm{P}_q.
\]
The ``hybridized'' summation by parts operator $\bm{Q}^i_N$ is defined as the block matrix involving both volume and surface quadratures
\begin{gather}
\bm{E} = \bm{V}_f\bm{P}_q, \qquad \bm{B}^i = \bm{W}_f \diag{\hat{\bm{n}}_i}, \qquad \bm{Q}^i_N  = \LRs{
\begin{array}{cc}
\bm{Q}^i - \frac{1}{2}\bm{E}^T\bm{B}^i\bm{E} &  \frac{1}{2}\bm{E}^T\bm{B}^i\\
-\frac{1}{2}\bm{B}^i\bm{E} & \frac{1}{2} \bm{B}^i
\end{array}}.  \label{eq:QN}
\end{gather}
Here, $\bm{B}^i$ is a boundary ``integration'' matrix, and $\bm{E}$ denotes the extrapolation matrix which maps values at volume quadrature points to values at surface quadrature points using quadrature-based $L^2$ projection and polynomial interpolation.  


For $\bm{Q}^i$ which satisfy a ``generalized'' SBP property, the matrix $\bm{Q}^i_N$ also satisfies a summation-by-parts (SBP) property, which is used to prove semi-discrete entropy stability for nonlinear conservation laws.  
\begin{theorem}
If $\bm{Q}^i$ satisfies the generalized SBP property
\begin{gather}
\bm{Q}^i = \bm{E}^T\bm{B}^i\bm{E} - \LRp{\bm{Q}^i}^T,
\label{eq:gsbp}
\end{gather}
then the hybridized SBP operator $\bm{Q}^i_N$ (\ref{eq:QN}) satisfies a summation by parts property:
\begin{gather}
\bm{Q}^i_N+\LRp{\bm{Q}^i_N}^T = \bm{B}^i_N, \qquad \bm{B}^i_N = \LRp{\begin{array}{cc}\bm{0}&\\ & \bm{B}^i\end{array}}.\label{eq:dsbp}
\end{gather}
\label{lemma:dsbp}
\end{theorem}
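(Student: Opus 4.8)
The plan is to verify the claim by a direct block-matrix computation, the only inputs being the generalized SBP hypothesis (\ref{eq:gsbp}) and the symmetry of the diagonal boundary matrix $\bm{B}^i$. First I would record the transpose of the block operator: since $\bm{B}^i = \bm{W}_f\diag{\hat{\bm{n}}_i}$ is diagonal it is symmetric, $\LRp{\bm{B}^i}^T = \bm{B}^i$, and consequently $\LRp{\bm{E}^T\bm{B}^i\bm{E}}^T = \bm{E}^T\bm{B}^i\bm{E}$. Taking the transpose of (\ref{eq:QN}) block by block then gives
\[
\LRp{\bm{Q}^i_N}^T = \LRs{\begin{array}{cc} \LRp{\bm{Q}^i}^T - \frac{1}{2}\bm{E}^T\bm{B}^i\bm{E} & -\frac{1}{2}\bm{E}^T\bm{B}^i \\ \frac{1}{2}\bm{B}^i\bm{E} & \frac{1}{2}\bm{B}^i \end{array}}.
\]

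Next I would add $\bm{Q}^i_N$ and $\LRp{\bm{Q}^i_N}^T$ blockwise. The off-diagonal blocks cancel, since $\frac{1}{2}\bm{E}^T\bm{B}^i - \frac{1}{2}\bm{E}^T\bm{B}^i = \bm{0}$ and $-\frac{1}{2}\bm{B}^i\bm{E} + \frac{1}{2}\bm{B}^i\bm{E} = \bm{0}$; the bottom-right block is $\frac{1}{2}\bm{B}^i + \frac{1}{2}\bm{B}^i = \bm{B}^i$. The only block that uses the hypothesis is the top-left one, which equals $\bm{Q}^i + \LRp{\bm{Q}^i}^T - \bm{E}^T\bm{B}^i\bm{E}$; by the generalized SBP property (\ref{eq:gsbp}) we have $\bm{Q}^i + \LRp{\bm{Q}^i}^T = \bm{E}^T\bm{B}^i\bm{E}$, so this block vanishes. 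Collecting the four blocks yields exactly $\bm{B}^i_N$, which is (\ref{eq:dsbp}).

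Since every step is an identity manipulation, there is no real obstacle here beyond bookkeeping: one must be careful that the transpose of a block matrix transposes each block and swaps the off-diagonal positions, and that $\bm{B}^i$ (being diagonal) and $\bm{E}^T\bm{B}^i\bm{E}$ are symmetric so that no stray transpose symbols survive. It may also be worth remarking that the construction (\ref{eq:QN}) is essentially forced: any decomposition $\bm{Q}^i_N = \bm{S}^i + \frac{1}{2}\bm{B}^i_N$ with $\bm{S}^i$ skew-symmetric automatically satisfies (\ref{eq:dsbp}), and (\ref{eq:gsbp}) is precisely the condition guaranteeing that the top-left block $\bm{Q}^i - \frac{1}{2}\bm{E}^T\bm{B}^i\bm{E}$ is itself skew-symmetric — an observation that motivates the skew-symmetric reformulation developed in the remainder of the paper.
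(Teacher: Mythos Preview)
Your proof is correct and is exactly the direct block-matrix computation the paper has in mind; the paper simply defers to \cite{chan2017discretely} and calls it a straightforward extension rather than writing it out. Your additional remark that (\ref{eq:gsbp}) is precisely the condition making the top-left block skew-symmetric is a nice observation that foreshadows the skew-hybridized construction in Section~\ref{sec:skew1}.
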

\begin{proof}
The proof is a straightforward extension of Theorem 1 in \cite{chan2017discretely} to polynomial approximation spaces on non-simplicial elements.  
\qed\end{proof}

The matrix $\bm{Q}^i$ satisfies a generalized SBP property if the volume and surface quadrature rules are sufficiently accurate such that the quantities
\[
\int_{\hat{D}} \pd{u}{\hat{x}_i} v, \qquad \int_{\partial \hat{D}} u v \hat{n}_i
\]
are integrated exactly for all $u,v \in V^N\LRp{\hat{D}}$ and $i = 1,\ldots, d$.  This implies that Theorem~\ref{lemma:dsbp} is satisfied for sufficiently accurate volume and surface quadratures.  For example, on simplicial elements, (\ref{eq:dsbp}) holds if the volume quadrature is exact for polynomial integrands of total degree $(2N-1)$, and the surface integral is exact for degree $2N$ polynomials on each face.  Tensor product elements require stricter conditions: (\ref{eq:dsbp}) holds if both the volume and surface quadratures are exact for polynomial integrands of degree $2N$ in each coordinate, due to the fact that derivatives of $u\in Q^N$ are degree $(N-1)$ polynomials with respect to one coordinate and degree $N$ with respect to others.  

\begin{remark}
It should be stressed that the accuracy conditions on volume and surface quadratures are sufficient but not necessary conditions for Theorem~\ref{lemma:dsbp}.  For example, it is well known that the use of $(N+1)$ point Gauss-Legendre-Lobatto (GLL) rules for both volume and surface quadratures result in a generalized SBP property, despite the fact that these rules are only accurate for degree $(2N-1)$ polynomials.  
\label{remark:dsbp}
\end{remark}

When a generalized SBP property holds for $\bm{Q}^i$, entropy stability can be proven using the SBP property in Theorem~\ref{lemma:dsbp} \cite{chan2017discretely, chan2018discretely}.  The focus of this work is to address cases where the generalized SBP property (and as a result, the SBP property in Theorem~\ref{lemma:dsbp}) do not hold. 

\section{Skew-symmetric entropy conservative formulations on a single element}
\label{sec:skew1}
While the SBP property has been used to derive entropy stable schemes,  \bnote{it is difficult to enforce the SBP property (\ref{eq:dsbp}) for $\bm{Q}^i_N$} in certain discretization settings, such as hybrid and non-conforming meshes.  This difficulty is a result of the choices of volume and surface quadrature which naturally arise in these settings.  We first illustrate how specific pairings of volume and surface quadratures can result in the loss of the SBP property (\ref{eq:dsbp}) \bnote{for $\bm{Q}^i_N$}.  We then propose an \note{alternative skew-symmetric version of the hybridized SBP operator which satisfies the SBP property by construction}.  The use of these operators results in formulations which are entropy conservative \bnote{under a wider range of quadratures.}

\subsection{Loss of the SBP property}

In this section, we give examples of specific pairings of volume and surface quadratures under which the decoupled SBP property does not hold (see Figure~\ref{fig:sbploss}).  We consider two dimensional reference elements $\hat{D}$ with spatial coordinates $x,y$.
\begin{figure}
\centering
\subfloat[GLL volume quadrature, Gauss surface quadrature]{\includegraphics[width=.4\textwidth]{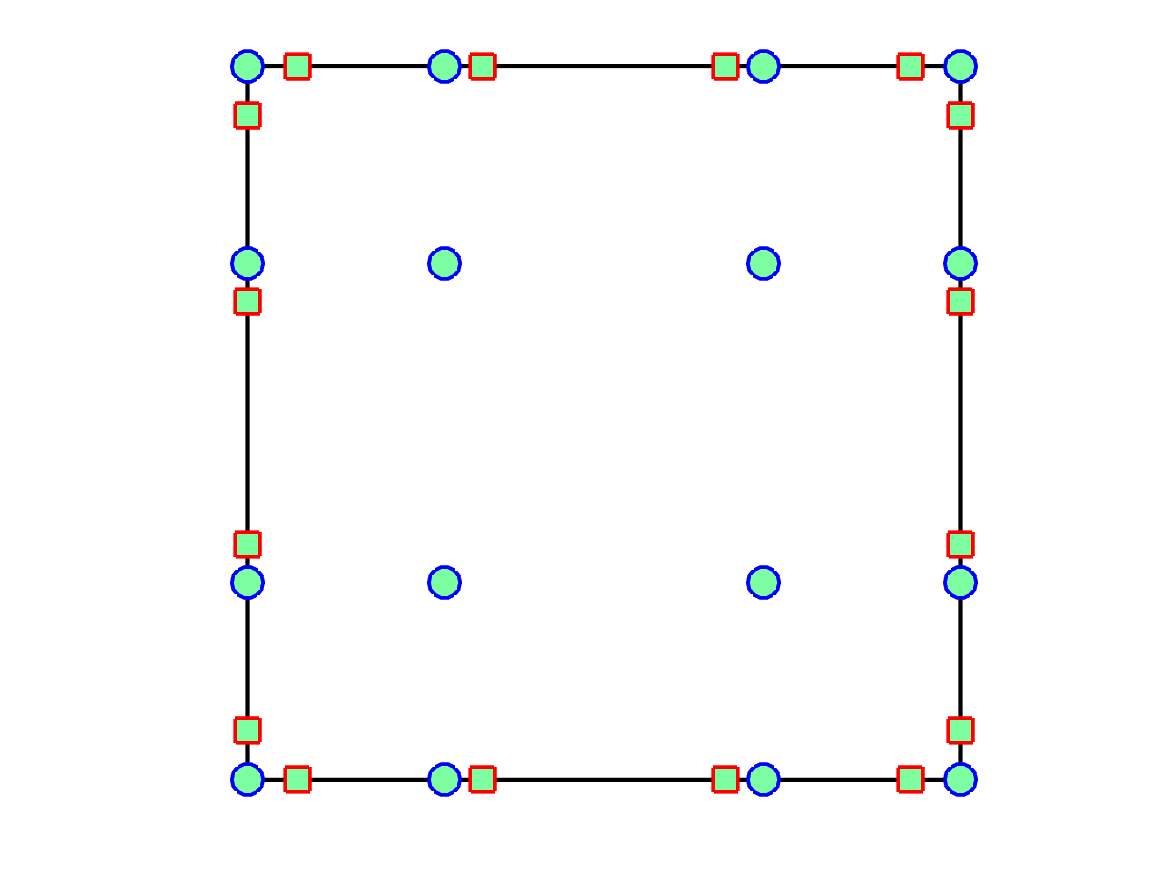}\label{subfig:gllgq}}
\hspace{2em}
\subfloat[Degree $2N$ volume quadrature, GLL surface quadrature]{\includegraphics[width=.4\textwidth]{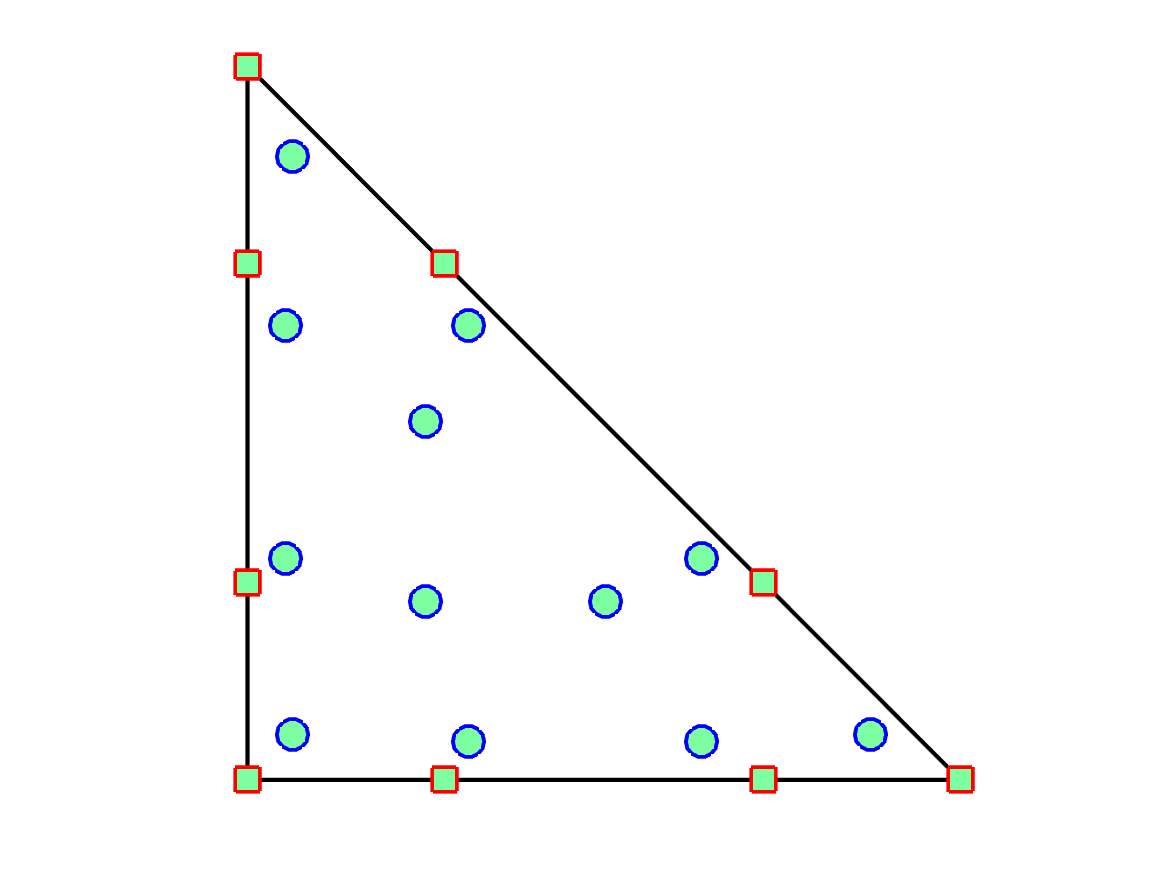}\label{subfig:trigll}}
\caption{Volume and surface quadrature pairs which do not satisfy the assumptions of Theorem~\ref{lemma:dsbp}, and thus do not possess the decoupled SBP property (\ref{eq:dsbp}). Volume quadrature nodes are drawn as circles, while surface quadrature nodes are drawn as squares.}
\label{fig:sbploss}
\end{figure}

\paragraph{Quadrilateral elements (Figure~\ref{subfig:gllgq})} We first consider a quadrilateral element $\hat{D}$ with an $(N+1)$ point tensor product GLL volume quadrature and $(N+1)$ point Gauss quadrature on each face.  Let $u,v \in Q^N$ denote two arbitrary degree $N$ polynomials.  The assumptions of Theorem~\ref{lemma:dsbp} are that the volume quadrature exactly integrates $\int_{\hat{D}} \pd{u}{x_i} v$ and that the surface quadrature exactly integrates $\int_{\partial \hat{D}} u v \hat{n}_i$ on $\hat{D}$.  Because the $(N+1)$-point Gauss rule is exact for polynomials of degree $2N+1$ and the product $uv \in P^{2N}$ on each face, the surface quadrature satisfies the assumptions of Theorem~\ref{lemma:dsbp}.  However, the 1D GLL rule is only exact for polynomials of degree $(2N-1)$.  The derivative $\pd{u}{x}$ is a polynomial of degree $(N-1)$ in $x$, but is degree $N$ in $y$.  Thus, $\pd{u}{x}v$ is a polynomial of degree $(2N-1)$ in $x$ but degree $2N$ in $y$, and is not integrated exactly by the volume quadrature.  

\paragraph{Triangular elements (Figure~\ref{subfig:trigll})} We next consider a triangular element $\hat{D}$, where the volume quadrature is exact for degree $2N$ polynomials \cite{xiao2010quadrature} and an $(N+1)$-point GLL quadrature on each face.  Let $u,v \in P^N$ denote two arbitrary degree $N$ polynomials.  The derivative $\pd{u}{x} \in P^{(N-1)}$, and $\pd{u}{x}v \in P^{(2N-1)}$, so the volume quadrature satisfies the assumptions of Theorem~\ref{lemma:dsbp}.  However, because the surface quadrature is exact only degree $(2N-1)$ polynomials and the trace of $uv\in P^{2N}$, the surface quadrature does not satisfy the assumptions of Theorem~\ref{lemma:dsbp}.

\begin{figure}
\centering
\begingroup
\captionsetup[subfigure]{width=.425\textwidth}
\subfloat[Insufficiently accurate surface quadrature on the triangle element.]{\includegraphics[width=.425\textwidth]{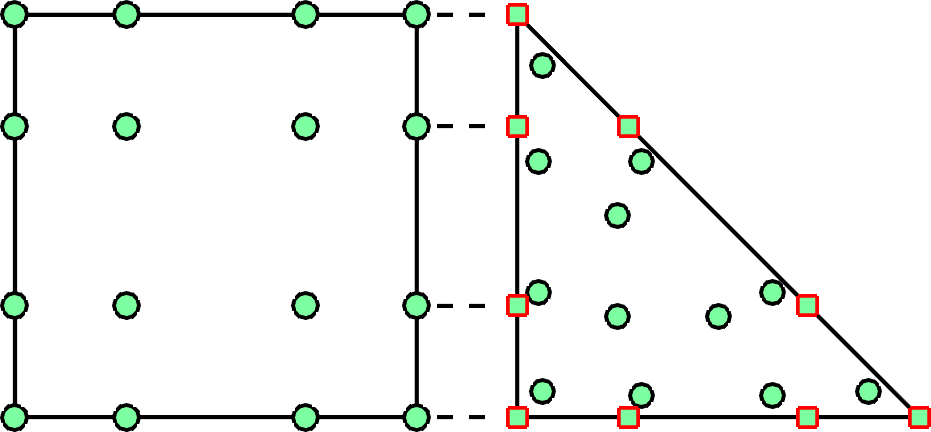}\label{subfig:hybrid1}}
\endgroup
\hspace{2em}
\subfloat[Incompatible surface quadrature on the quadrilateral element.]{\includegraphics[width=.425\textwidth]{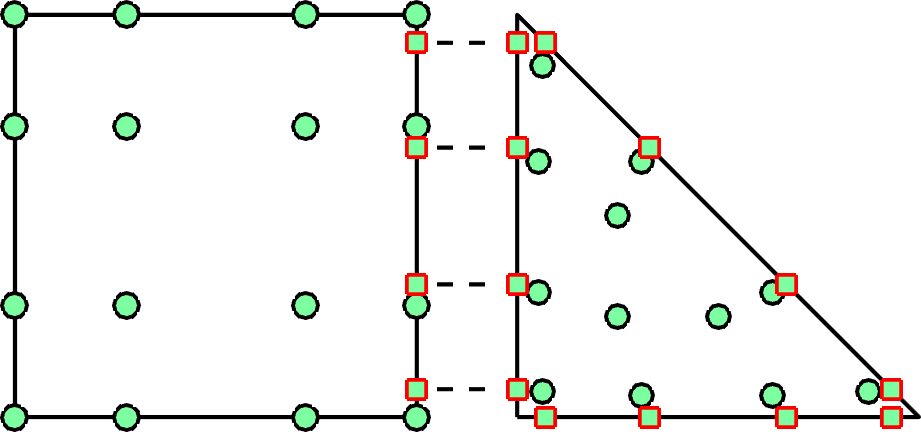}\label{subfig:hybrid2}}
\caption{Examples of interface couplings which do not result in a decoupled SBP property (\ref{eq:dsbp}). Volume quadrature nodes are drawn as circles, while surface quadrature nodes are drawn as  squares. }
\label{fig:hybrid}
\end{figure}

\paragraph{}These specific pairings of volume and surface quadratures appear naturally for hybrid meshes consisting of DG-SEM quadrilateral elements (using GLL volume quadrature) and triangular elements, as shown in Figure~\ref{fig:hybrid}.  In Figure~\ref{subfig:hybrid1}, the surface quadrature is a $(N+1)$ point GLL rule, and results in a loss of the SBP property on the triangle.  In Figure~\ref{subfig:hybrid2}, the surface quadrature is a $(N+1)$ point Gauss-Legendre rule, and results in a loss of the SBP property on the quadrilateral element.  The goal of this work is to construct high order accurate discretizations which preserve entropy conservation for situations in which the decoupled SBP property (\ref{eq:dsbp}) does not hold.  

\subsection{\bnote{An alternative construction of hybridized SBP operators}} 

The property (\ref{eq:dsbp}) relates the polynomial exactness of specific quadrature rules to algebraic properties of quadrature-based matrices.  We will relax accuracy conditions on these quadrature rules by utilizing \note{an alternative construction of $\bm{Q}^i_N$ based on the skew-symmetric matrix $\bm{Q}^i - \LRp{\bm{Q}^i}^T$.  

\begin{lemma}
\bnote{Let $\tilde{\bm{Q}}^i_{N}$ denote the skew-hybridized SBP operator defined by
\begin{equation}
\tilde{\bm{Q}}^i_{N} = \frac{1}{2}\begin{bmatrix}
\bm{Q}^i - \LRp{\bm{Q}^i}^T & \bm{E}^T\bm{B}^i\\
-\bm{B}^i\bm{E} & \bm{B}^i
\end{bmatrix}.
\label{eq:skewQN} 
\end{equation}
Then, $\tilde{\bm{Q}}^i_{N}$ satisfies the SBP property (\ref{eq:dsbp}), and $\tilde{\bm{Q}}^i_N$ and ${\bm{Q}}^i_N$ are identical if $\bm{Q}^i$ satisfies a generalized SBP property (\ref{eq:gsbp}).}
\end{lemma}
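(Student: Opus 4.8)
The plan is to establish both assertions by direct block-matrix computation, using only the structure of the constituent matrices and no quadrature accuracy assumptions.

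\textbf{The SBP property.} First I would record the one structural fact that drives everything: since $\bm{B}^i = \bm{W}_f\diag{\hat{\bm{n}}_i}$ is a product of diagonal matrices, it is itself diagonal, hence symmetric, $\LRp{\bm{B}^i}^T = \bm{B}^i$. Transposing (\ref{eq:skewQN}) then gives
\[
\LRp{\tilde{\bm{Q}}^i_{N}}^T = \frac{1}{2}\begin{bmatrix} \LRp{\bm{Q}^i}^T - \bm{Q}^i & -\bm{E}^T\bm{B}^i \\ \bm{B}^i\bm{E} & \bm{B}^i \end{bmatrix},
\]
and adding this to $\tilde{\bm{Q}}^i_{N}$, the $(1,1)$ blocks cancel (the skew part $\bm{Q}^i - \LRp{\bm{Q}^i}^T$ has vanishing symmetric part by construction), the off-diagonal blocks $\pm\tfrac{1}{2}\bm{E}^T\bm{B}^i$ and $\mp\tfrac{1}{2}\bm{B}^i\bm{E}$ cancel (here symmetry of $\bm{B}^i$ is used), and the $(2,2)$ blocks add to $\bm{B}^i$. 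Hence $\tilde{\bm{Q}}^i_{N} + \LRp{\tilde{\bm{Q}}^i_{N}}^T = \bm{B}^i_N$, which is exactly (\ref{eq:dsbp}).

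\textbf{Equivalence under a generalized SBP property.} Comparing (\ref{eq:skewQN}) with the definition (\ref{eq:QN}) of $\bm{Q}^i_N$, the $(1,2)$, $(2,1)$, and $(2,2)$ blocks already coincide, so only the $(1,1)$ blocks need to be reconciled. Assuming (\ref{eq:gsbp}), i.e.\ $\bm{Q}^i + \LRp{\bm{Q}^i}^T = \bm{E}^T\bm{B}^i\bm{E}$, I would substitute $\LRp{\bm{Q}^i}^T = \bm{E}^T\bm{B}^i\bm{E} - \bm{Q}^i$ into the $(1,1)$ block of $\tilde{\bm{Q}}^i_{N}$:
\[
\frac{1}{2}\LRp{\bm{Q}^i - \LRp{\bm{Q}^i}^T} = \frac{1}{2}\LRp{2\bm{Q}^i - \bm{E}^T\bm{B}^i\bm{E}} = \bm{Q}^i - \frac{1}{2}\bm{E}^T\bm{B}^i\bm{E},
\]
which is precisely the $(1,1)$ block of $\bm{Q}^i_N$. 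Therefore $\tilde{\bm{Q}}^i_{N} = \bm{Q}^i_N$ whenever (\ref{eq:gsbp}) holds.

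There is essentially no obstacle here: the computation is pure bookkeeping of $2\times 2$ block matrices, and the only point worth stating explicitly is the symmetry of $\bm{B}^i$, which is what forces the cross blocks to cancel in the first step. Indeed, the construction of $\tilde{\bm{Q}}^i_{N}$ is arranged precisely so that its $(1,1)$ block is skew-symmetric and its $(2,2)$ block is symmetric, which makes the SBP property automatic rather than contingent on the polynomial exactness of the volume and surface quadratures.
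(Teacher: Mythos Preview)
Your proof is correct and follows essentially the same approach as the paper: the paper dismisses the SBP property as holding ``by construction'' (you spell out the block cancellations explicitly, which is fine), and for the equivalence part both you and the paper reduce the question to matching the $(1,1)$ blocks and show this is equivalent to the generalized SBP property (\ref{eq:gsbp}) by an elementary rearrangement.
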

\begin{proof}
The SBP property (\ref{eq:dsbp}) holds by construction.  The equivalence between $\tilde{\bm{Q}}^i_N$ and ${\bm{Q}}^i_N$ requires that
\[
\frac{1}{2}\LRp{\bm{Q}^i + \LRp{\bm{Q}^i}^T} = \bm{Q}^i - \frac{1}{2}\bm{E}^T\bm{B}^i\bm{E}.
\]
Rearranging terms shows that this condition is equivalent to a scaling of the GSBP property (\ref{eq:gsbp})
\[
\frac{1}{2}\bm{Q}^i = \frac{1}{2}\LRp{\bm{E}^T\bm{B}^i\bm{E} - \LRp{\bm{Q}^i}^T}.
\]
\end{proof}
While $\tilde{\bm{Q}}^i_N$ is guaranteed to satisfy the SBP property, the accuracy of $\tilde{\bm{Q}}^i_N$ as a differentiation operator now depends on the volume and surface quadrature rules.  Before analyzing accuracy, we first derive conditions under which it is possible to use $\tilde{\bm{Q}}^i_N$ to construct entropy stable formulations of nonlinear conservation laws. }

\subsection{Entropy stability on a reference element}
\label{sec:singleelem}

In this section, we construct so-called ``entropy stable'' schemes on the reference element $\hat{D}$.  These methods ensure that the entropy inequality (\ref{eq:entropyineq}) is satisfied discretely by avoiding the use of the chain rule in the proof of entropy dissipation.  Entropy stable schemes rely on two main ingredients: an entropy stable numerical flux as defined by Tadmor \cite{tadmor1987numerical} and a concept referred to as ``flux differencing''.  Let $\bm{f}_S\LRp{\bm{u}_L,\bm{u}_R}$ be a numerical flux function which is a function of ``left'' and ``right'' states $\bm{u}_L,\bm{u}_R$.  The numerical flux $\bm{f}_S$ is \textit{entropy conservative} if it satisfies the following three conditions:  
\begin{gather}
\bm{f}^i_S(\bm{u},\bm{u}) = \bm{f}_i(\bm{u}), \qquad \text{(consistency)}\label{eq:esflux}\\
\bm{f}^i_S(\bm{u}_L,\bm{u}_R) = \bm{f}^i_S(\bm{u}_R,\bm{u}_R), \qquad \text{(symmetry)}\nonumber\\
\LRp{\bm{v}_L-\bm{v}_R}^T\bm{f}^i_S(\bm{u}_L,\bm{u}_R) = \psi_i(\bm{u}_L) - \psi_i(\bm{u}_R), \qquad \text{(conservation)}\nonumber
\end{gather}
for $i = 1,\ldots, d$.  The construction of entropy stable schemes will utilize \rnote{(\ref{eq:esflux})} in discretizations of both volume and surface terms in a DG formulation.  

We can now construct a skew-symmetric formulation on the reference element $\hat{D}$ and show that it is semi-discretely entropy conservative \bnote{under one additional condition on $\tilde{\bm{Q}}^i_N$}. This formulation can be made entropy stable by adding interface dissipation.  Let $\bm{u}_h$ denote the discrete solution, and let $\bm{u}_q$ denote the values of the solution at volume quadrature points.  We define the auxiliary conservative variables $\tilde{\bm{u}}$ in terms of the $L^2$ projections of the entropy variables 
\begin{gather}
\bm{v}_q = \bm{v}\LRp{\bm{u}_q}, \qquad \tilde{\bm{v}} = \begin{bmatrix}
\bm{V}_q\\
\bm{V}_f
\end{bmatrix}\bm{P}_q\bm{v}_q, \qquad \tilde{\bm{u}} = \bm{u}\LRp{\tilde{\bm{v}}}.
\end{gather}
A matrix formulation for (\ref{eq:nonlineqs}) on $\hat{D}$ is given in terms of $\tilde{\bm{u}}$
\bnote{\begin{gather}
\bm{M}\td{\bm{u}_h}{t} + \sum_{i=1}^d\LRs{\begin{array}{c}
\bm{V}_q \\ \bm{V}_f\end{array}}^T 
\LRp{2\tilde{\bm{Q}}^i_N \circ \bm{F}^i_S}\bm{1} + \bm{V}_f^T\bm{B}^i\LRp{\bm{f}_i^*-\bm{f}(\tilde{\bm{u}}_f)} = 0,  \label{eq:esdgSkew}\\
\LRp{\bm{F}^i_S}_{jk} = \bm{f}^i_S\LRp{\tilde{\bm{u}}_j,\tilde{\bm{u}}_k}, \qquad 1\leq j,k \leq N_q + N^f_q,\nonumber
\end{gather}
where $\tilde{\bm{u}}_f$ denotes the values of $\tilde{\bm{u}}$ on face nodes and $\bm{f}^*$ is some numerical flux, and $N_q, N^f_q$ denote the number of volume and face quadrature points, respectively.	.  This formulation is identical to that of \cite{chan2017discretely}, except that the hybridized SBP operators $\bm{Q}^i_N$ are replaced with their skew-hybridized versions $\tilde{\bm{Q}}^i_N$.  For this reason, we refer to (\ref{eq:esdgSkew}) as the ``skew-symmetric'' formulation.  Under the condition that $\tilde{\bm{Q}}^i_N \bm{1} = \bm{0}$, the formulation (\ref{eq:esdgSkew}) is entropy conservative over $\hat{D}$:}
\begin{theorem}
\bnote{Assume that $\tilde{\bm{Q}}^i_N \bm{1} = \bm{0}$.  }
Then, the formulation (\ref{eq:esdgSkew}) is entropy conservative such that
\begin{equation}
\bm{1}^T\bm{W}\td{U(\bm{u}_q)}{t} + \sum_{i=1}^d\bm{1}^T\bm{B}^i \LRp{\tilde{\bm{v}}_f^T\bm{f}_i^* - \psi_i(\tilde{\bm{u}}_f)} = 0, \qquad \bm{u}_q = \bm{V}_q\bm{u}_h.
\label{eq:esdgthm}
\end{equation}
\rnote{Here, $\psi_i(\tilde{\bm{u}}_f)$ denotes the function $\psi_i$ evaluated at the face values of the entropy-projected conservative variables $\tilde{\bm{u}}_f$}.
\label{thm:esdg}
\end{theorem}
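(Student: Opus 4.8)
The plan is to test the formulation (\ref{eq:esdgSkew}) against the entropy variables, mimicking the continuous proof of (\ref{eq:entropyineq}) but using the SBP property of $\tilde{\bm{Q}}^i_N$ and the entropy-conservation property of $\bm{f}^i_S$ in place of the chain rule. First I would left-multiply (\ref{eq:esdgSkew}) by $\tilde{\bm{v}}^T = \LRp{\bm{V}_q \bm{P}_q \bm{v}_q, \bm{V}_f \bm{P}_q \bm{v}_q}^T$. For the time-derivative term, I would use the definition of $\tilde{\bm{v}}$ together with the fact that $\bm{P}_q$ is the $\bm{M}$-orthogonal projection: $\tilde{\bm{v}}^T \bm{M} \td{\bm{u}_h}{t}$ collapses to $\bm{v}_q^T \bm{W} \bm{V}_q \td{\bm{u}_h}{t} = \bm{v}_q^T \bm{W} \td{\bm{u}_q}{t}$, and since $\bm{v}_q = \bm{v}(\bm{u}_q) = \pd{U}{\bm{u}}(\bm{u}_q)$ this is exactly $\bm{1}^T \bm{W} \td{U(\bm{u}_q)}{t}$, the first term of (\ref{eq:esdgthm}). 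The key point here is that the entropy-projection construction is precisely what makes this step work even though $\bm{u}_h$ is only a polynomial.

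The heart of the proof is the volume/flux-differencing term $\tilde{\bm{v}}^T \sum_i \begin{bmatrix}\bm{V}_q\\\bm{V}_f\end{bmatrix}^T \LRp{2\tilde{\bm{Q}}^i_N \circ \bm{F}^i_S}\bm{1}$. Writing $\tilde{\bm{v}}_h = \begin{bmatrix}\bm{V}_q\\\bm{V}_f\end{bmatrix}\bm{P}_q \bm{v}_q$ for the vector of entropy variables at the combined volume/face nodes, this term equals $\sum_i \tilde{\bm{v}}_h^T \LRp{2\tilde{\bm{Q}}^i_N \circ \bm{F}^i_S}\bm{1} = \sum_i \sum_{j,k} 2 (\tilde{\bm{Q}}^i_N)_{jk} (\tilde{\bm{v}}_h)_j \cdot \bm{f}^i_S(\tilde{\bm{u}}_j, \tilde{\bm{u}}_k)$. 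Using the symmetry of $\bm{f}^i_S$ I would symmetrize the double sum, replacing $2(\tilde{\bm{v}}_h)_j$ by $(\tilde{\bm{v}}_h)_j - (\tilde{\bm{v}}_h)_k$ against the skew part of $\tilde{\bm{Q}}^i_N$ and by $(\tilde{\bm{v}}_h)_j + (\tilde{\bm{v}}_h)_k$ against the symmetric part; invoking $\tilde{\bm{Q}}^i_N + (\tilde{\bm{Q}}^i_N)^T = \bm{B}^i_N$ (the SBP property, which holds by the Lemma) the symmetric part contributes only a boundary term $\sum_{j}(\bm{B}^i_N)_{jj}\LRp{(\tilde{\bm{v}}_h)_j\cdot\bm{f}^i_S(\tilde{\bm u}_j,\tilde{\bm u}_j)}$, while the skew part, combined with the conservation property $\LRp{\bm{v}_L - \bm{v}_R}^T \bm{f}^i_S(\bm{u}_L,\bm{u}_R) = \psi_i(\bm{u}_L) - \psi_i(\bm{u}_R)$, telescopes into $\sum_{j,k}(\tilde{\bm{Q}}^i_N)_{jk}\LRp{\psi_i(\tilde{\bm u}_j) - \psi_i(\tilde{\bm u}_k)}$. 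The $\psi_i(\tilde{\bm u}_k)$ piece of this last sum is $-\sum_k \LRp{\sum_j (\tilde{\bm{Q}}^i_N)_{jk}\psi_i(\tilde{\bm u}_k)} = -\sum_k (\bm{1}^T\tilde{\bm{Q}}^i_N)_k \psi_i(\tilde{\bm u}_k)$; since $(\tilde{\bm{Q}}^i_N)^T\bm{1} = \bm{B}^i_N\bm{1} - \tilde{\bm{Q}}^i_N\bm{1} = \bm{B}^i_N\bm{1}$ using the hypothesis $\tilde{\bm{Q}}^i_N\bm{1} = \bm{0}$, this reduces to a pure boundary term; similarly the $\psi_i(\tilde{\bm u}_j)$ piece uses $\tilde{\bm{Q}}^i_N\bm{1}=\bm 0$ directly and vanishes in the interior, leaving a boundary term. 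Collecting, using consistency $\bm{f}^i_S(\bm u,\bm u) = \bm f_i(\bm u)$ and the block structure of $\bm{B}^i_N$ (zero volume block, $\bm{B}^i$ face block) so that all surviving contributions live on face nodes, the volume term reduces to $-\sum_i \bm{1}^T\bm{B}^i\LRp{\tilde{\bm v}_f^T \bm{f}(\tilde{\bm u}_f) - \psi_i(\tilde{\bm u}_f)}$, where I have also used that $\tilde{\bm v}_f$ is the face part of $\tilde{\bm v}_h$ and $\bm{v}(\tilde{\bm u}_f) = \tilde{\bm v}_f$ by the definition $\tilde{\bm u} = \bm{u}(\tilde{\bm v})$.

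Finally, I would add the contribution of the surface penalty term $\tilde{\bm{v}}^T \bm{V}_f^T \bm{B}^i\LRp{\bm f_i^* - \bm f(\tilde{\bm u}_f)} = \bm{1}^T\bm{B}^i\tilde{\bm v}_f^T\LRp{\bm f_i^* - \bm f(\tilde{\bm u}_f)}$ (again using that $\bm{V}_f\bm{P}_q\bm{v}_q$ gives $\tilde{\bm v}_f$); the $\bm{f}(\tilde{\bm u}_f)$ terms cancel against the leftover from the volume term, and combining with the time-derivative term yields exactly (\ref{eq:esdgthm}). The main obstacle I anticipate is the bookkeeping in the flux-differencing step — specifically, making the symmetrization of the double sum rigorous and tracking how the hypothesis $\tilde{\bm{Q}}^i_N\bm{1}=\bm{0}$ (rather than the full GSBP property) is used to kill the interior $\psi_i$ contributions while retaining precisely the correct boundary terms. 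Once that is done carefully, everything else is the routine telescoping argument of Tadmor-type entropy conservation proofs, and the distinctive feature here is simply that the SBP property of $\tilde{\bm{Q}}^i_N$ is guaranteed by construction rather than by quadrature exactness.
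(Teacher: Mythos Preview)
Your proposal is correct and follows essentially the same approach as the paper, which simply defers to Theorem~2 of \cite{chan2017discretely}: test against the projected entropy variables, use the SBP decomposition of $\tilde{\bm{Q}}^i_N$ together with the symmetry and Tadmor conservation property of $\bm{f}^i_S$, and invoke the hypothesis $\tilde{\bm{Q}}^i_N\bm{1}=\bm{0}$ to collapse the $\psi_i$ telescoping sums to boundary terms. One minor slip: your stated sign on the collected volume contribution should be $+\bm{1}^T\bm{B}^i\LRp{\tilde{\bm v}_f^T \bm{f}_i(\tilde{\bm u}_f) - \psi_i(\tilde{\bm u}_f)}$ rather than negative (the $\psi_i(\tilde{\bm{u}}_j)$ piece vanishes entirely since $\tilde{\bm{Q}}^i_N\bm{1}=\bm{0}$, and the $\psi_i(\tilde{\bm{u}}_k)$ piece contributes $-\bm{1}^T\bm{B}^i_N\bm{\psi}_i$), which is exactly what is needed for the $\bm{f}_i(\tilde{\bm{u}}_f)$ terms to cancel against the surface penalty and yield (\ref{eq:esdgthm}).
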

The steps of the proof are identical to those of Theorem 2 in \cite{chan2017discretely}, and we skip them for brevity.

\begin{remark}
We note that (\ref{eq:esdgSkew}) is also equivalent to the following skew-symmetric formulation:
\begin{gather}
\bm{M}\td{\bm{u}_h}{t} + \sum_{i=1}^d\LRs{\begin{array}{c}
\bm{V}_q \\ \bm{V}_f\end{array}}^T 
\LRp{\LRp{\bm{Q}^i_N - \LRp{\bm{Q}^i_N}^T} \circ \bm{F}^i_S}\bm{1} + \bm{V}_f^T\bm{B}^i\bm{f}_i^* = 0,  \label{eq:esdgSkew2}\\
\LRp{\bm{F}^i_S}_{jk} = \bm{f}^i_S\LRp{\tilde{\bm{u}}_j,\tilde{\bm{u}}_k}, \qquad 1\leq j,k \leq N_q + N^f_q,\nonumber
\end{gather}
where the skew-symmetric matrix $\LRp{\bm{Q}^i_N - \LRp{\bm{Q}^i_N}^T}$ possesses the following block structure:
\[
\LRp{\bm{Q}^i_N - \LRp{\bm{Q}^i_N}^T} = \begin{pmatrix}
\bm{Q}_i-\bm{Q}_i^T & {\bm{E}}^T \bm{B}^i\\
-\bm{B}^i\bm{E} & \bm{0}
\end{pmatrix}.
\]
\end{remark}

The skew symmetric formulation can also be shown to be locally conservative in the sense of \cite{shi2017local}, which is necessary to prove that the numerical solution convergences to the weak solution under mesh refinement.  
\begin{theorem}
The formulation (\ref{eq:esdgSkew}) is locally conservative such that
\begin{align}
\bm{1}^T\bm{W}\td{\LRp{\bm{V}_q\bm{u}}}{t} + \sum_{i=1}^d\bm{1}^T\bm{B}^i\bm{f}_i^* = 0. 
\end{align}
\end{theorem}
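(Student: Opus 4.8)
The plan is to test the formulation (\ref{eq:esdgSkew}) against the constant function. Since $1 \in V^N$, there is a coefficient vector $\bm{c}$ whose associated polynomial equals $1$, so that $\bm{V}_q\bm{c} = \bm{1}$ and $\bm{V}_f\bm{c} = \bm{1}$, where $\bm{1}$ denotes the vector of ones of the appropriate length. Left-multiplying (\ref{eq:esdgSkew}) by $\bm{c}^T$ and using $\bm{M} = \bm{V}_q^T\bm{W}\bm{V}_q$, the mass matrix term collapses to $\LRp{\bm{V}_q\bm{c}}^T\bm{W}\bm{V}_q\td{\bm{u}_h}{t} = \bm{1}^T\bm{W}\td{\LRp{\bm{V}_q\bm{u}}}{t}$, the surface term collapses to $\bm{1}^T\bm{B}^i\LRp{\bm{f}_i^*-\bm{f}_i(\tilde{\bm{u}}_f)}$ via $\bm{c}^T\bm{V}_f^T = \LRp{\bm{V}_f\bm{c}}^T = \bm{1}^T$, and, using $\LRs{\begin{array}{c}\bm{V}_q\\ \bm{V}_f\end{array}}\bm{c} = \bm{1}$, the flux differencing term becomes $\bm{1}^T\LRp{2\tilde{\bm{Q}}^i_N \circ \bm{F}^i_S}\bm{1}$.

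The heart of the argument is evaluating this last term. First I would record the elementary identity $\bm{1}^T\LRp{\bm{A}\circ\bm{F}}\bm{1} = \sum_{j,k}\bm{A}_{jk}\bm{F}_{jk}$ (interpreted componentwise in the $n$ conserved variables), from which, for symmetric $\bm{F}$, relabeling indices gives $\bm{1}^T\LRp{\LRp{\bm{A}-\bm{A}^T}\circ\bm{F}}\bm{1} = 0$. The matrix $\bm{F}^i_S$ is symmetric by the symmetry property of the entropy conservative flux in (\ref{eq:esflux}). Next, using the SBP property (\ref{eq:dsbp}), which $\tilde{\bm{Q}}^i_N$ satisfies by construction (the preceding lemma on the skew-hybridized operator), I would split $2\tilde{\bm{Q}}^i_N = \LRp{\tilde{\bm{Q}}^i_N - \LRp{\tilde{\bm{Q}}^i_N}^T} + \bm{B}^i_N$. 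The skew-symmetric piece contributes nothing against the symmetric $\bm{F}^i_S$, leaving $\bm{1}^T\LRp{2\tilde{\bm{Q}}^i_N\circ\bm{F}^i_S}\bm{1} = \bm{1}^T\LRp{\bm{B}^i_N\circ\bm{F}^i_S}\bm{1}$. Since $\bm{B}^i_N = \LRp{\begin{array}{cc}\bm{0}&\\ & \bm{B}^i\end{array}}$ with $\bm{B}^i$ diagonal, the Hadamard product retains only diagonal entries of $\bm{F}^i_S$ at face indices, and by consistency $\LRp{\bm{F}^i_S}_{jj} = \bm{f}^i_S(\tilde{\bm{u}}_j,\tilde{\bm{u}}_j) = \bm{f}_i(\tilde{\bm{u}}_j)$, so this equals $\bm{1}^T\bm{B}^i\bm{f}_i(\tilde{\bm{u}}_f)$.

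Summing over $i = 1,\ldots,d$ and combining the flux differencing contribution $\bm{1}^T\bm{B}^i\bm{f}_i(\tilde{\bm{u}}_f)$ with the surface contribution $\bm{1}^T\bm{B}^i\LRp{\bm{f}_i^*-\bm{f}_i(\tilde{\bm{u}}_f)}$, the $\bm{f}_i(\tilde{\bm{u}}_f)$ terms cancel, yielding $\bm{1}^T\bm{W}\td{\LRp{\bm{V}_q\bm{u}}}{t} + \sum_{i=1}^d\bm{1}^T\bm{B}^i\bm{f}_i^* = 0$, as claimed. I do not expect a real obstacle: the only care needed is in bookkeeping between the volume and face node blocks and in treating the $n$-component vector structure of $\bm{u}$ componentwise; it is also worth noting that, in contrast to Theorem~\ref{thm:esdg}, local conservation uses only the SBP property (\ref{eq:dsbp}) together with the consistency and symmetry of $\bm{f}^i_S$, and not the additional hypothesis $\tilde{\bm{Q}}^i_N\bm{1} = \bm{0}$.
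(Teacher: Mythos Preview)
Your proof is correct and follows essentially the same approach as the paper: both arguments test the formulation against the constant function and exploit the fact that the Hadamard product of a skew-symmetric matrix with the symmetric flux matrix $\bm{F}^i_S$ is skew-symmetric, so $\bm{1}^T(\cdot)\bm{1}$ vanishes. The only cosmetic difference is that the paper starts from the equivalent form (\ref{eq:esdgSkew2}), where the surface term already involves $\bm{f}_i^*$ alone and the volume operator is manifestly skew-symmetric, whereas you start from (\ref{eq:esdgSkew}) and carry out the SBP split $2\tilde{\bm{Q}}^i_N = (\tilde{\bm{Q}}^i_N - (\tilde{\bm{Q}}^i_N)^T) + \bm{B}^i_N$ explicitly, then cancel the resulting $\bm{1}^T\bm{B}^i\bm{f}_i(\tilde{\bm{u}}_f)$ against the surface term; the content is the same.
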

\begin{proof}
To show local conservation, we test (\ref{eq:esdgSkew}) with $1$
\begin{align}
\bm{1}^T\bm{W}\bm{V}_q\td{\bm{u}_h}{t} + \sum_{i=1}^d
\bm{1}^T
\LRp{\LRp{\bm{Q}^i_N - \LRp{\bm{Q}^i_N}^T} \circ \bm{F}_S}\bm{1} + \bm{1}^T\bm{W}_f \diag{\hat{\bm{n}}}\bm{f}_i^* = 0. 
\end{align}
Because $\bm{F}_S$ is symmetric and $\LRp{\bm{Q}^i_N - \LRp{\bm{Q}^i_N}^T}$ is skew-symmetric, their Hadamard product is also skew-symmetric.  Using that $\bm{x}^T\bm{A}\bm{x} = 0$ for any skew symmetric matrix $\bm{A}$, the volume term $\bm{1}^T\LRp{\LRp{\bm{Q}^i_N - \LRp{\bm{Q}^i_N}^T} \circ \bm{F}_S}\bm{1}$ vanishes.
\qed\end{proof}

\subsection{\bnote{Properties of $\tilde{\bm{Q}}^i_N$ and quadrature accuracy}}

\bnote{The proof of the semi-discrete entropy inequality in Theorem~\ref{thm:esdg} requires both the SBP condition (\ref{eq:dsbp}) and that $\tilde{\bm{Q}}^i_N\bm{1} = \bm{0}$.  While the SBP condition is guaranteed by construction, $\tilde{\bm{Q}}^i_N\bm{1} = \bm{0}$ only holds under sufficiently accurate quadrature rules.  In \cite{chan2017discretely}, it was shown that the hybridized SBP operator satisfies $\bm{Q}^i_N\bm{1} = \bm{0}$ for any volume quadrature such that the mass matrix $\bm{M}$ is positive-definite.  However, ensuring that the skew-hybridized SBP operator satisfies $\tilde{\bm{Q}}^i_N\bm{1} = \bm{0}$ now requires conditions on both volume and surface quadratures which are related to a weak version of the generalized SBP condition (\ref{eq:gsbp}).  }

Throughout the remainder of this work, we will assume that the \bnote{volume and surface quadrature satisfy the following assumptions} for specific functions $v(\bm{x})$:
\begin{assumption}
\label{ass:quad}
Let $v \in V^{N}$ denote some fixed polynomial.  We assume that: 
\begin{enumerate}
\item the mass matrix $\bm{M}$ is positive definite under the volume quadrature rule,
\item the volume quadrature rule is exact for integrals of the form\\$\int_{\hat{D}} \pd{u}{\hat{x}_j} v$ for all $u \in V^N\LRp{\hat{D}}$, $j = 1,\ldots, d$.
\item the surface quadrature rule is exact for integrals of the form\\$\int_{\partial \hat{D}} u v \hat{n}_j$ for all $u \in V^N\LRp{\hat{D}}$, $j = 1,\ldots, d$, and $f \in \partial \hat{D}$.  
\end{enumerate}
\end{assumption}

The conditions of Assumption~\ref{ass:quad} are relatively standard within the SBP literature \cite{hicken2016multidimensional, chan2017discretely, crean2018entropy}, though they have not previously depended on the specific choice of polynomial $v(\bm{x})$.  \bnote{The following theorem shows how these accuracy conditions are related to the condition $\tilde{\bm{Q}}^i_N\bm{1} = \bm{0}$.}
\begin{lemma}
\label{lemma:sbpcor}
\bnote{Suppose Assumption~\ref{ass:quad} holds for $v(\bm{x}) = 1$.  Then, $\tilde{\bm{Q}}^i_N\bm{1} = \bm{0}$.}
\end{lemma}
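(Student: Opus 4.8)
The plan is to check $\tilde{\bm Q}^i_N\bm 1 = \bm 0$ one block-row at a time, writing $\bm 1 = (\bm 1_{N_q}^T,\bm 1_{N^f_q}^T)^T$ for the split into volume and surface all-ones vectors. Two preliminary identities, each requiring only that $\bm M$ be invertible (part~1 of Assumption~\ref{ass:quad}), carry most of the weight. First, since $\bm P_q$ reproduces polynomials, $\bm P_q\bm 1_{N_q}$ is the coefficient vector of the constant function $1\in V^N$, and therefore $\bm E\bm 1_{N_q} = \bm V_f\bm P_q\bm 1_{N_q} = \bm 1_{N^f_q}$. Second, $\bm Q^i\bm 1_{N_q} = \bm W\bm V_q\bm D^i(\bm P_q\bm 1_{N_q}) = \bm 0$, because $\bm D^i$ maps the coefficients of a constant to $\bm 0$. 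The first identity makes the bottom block $\frac{1}{2}\bm B^i(-\bm E\bm 1_{N_q} + \bm 1_{N^f_q})$ vanish, and the second reduces the top block to $\frac{1}{2}\LRp{\bm E^T\bm B^i\bm 1_{N^f_q} - (\bm Q^i)^T\bm 1_{N_q}}$. So the whole statement comes down to the identity $(\bm Q^i)^T\bm 1_{N_q} = \bm E^T\bm B^i\bm 1_{N^f_q}$, and this is where Assumption~\ref{ass:quad} with $v=1$ enters.

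To establish that identity I would test it against an arbitrary polynomial $b\in V^N$ with coefficient vector $\bm b$. Using $\bm P_q\bm V_q = \bm I$, the quantity $\bm b^T\bm V_q^T(\bm Q^i)^T\bm 1_{N_q} = \bm 1_{N_q}^T\bm W\bm V_q\bm D^i\bm b$ is exactly the volume quadrature of $1\cdot\partial b/\partial\hat x_i$, which by part~2 of Assumption~\ref{ass:quad} (with $v=1$) equals $\int_{\hat D}\partial b/\partial\hat x_i$. Likewise $\bm b^T\bm V_q^T\bm E^T\bm B^i\bm 1_{N^f_q} = (\bm V_f\bm b)^T\bm B^i\bm 1_{N^f_q}$ is the surface quadrature of $1\cdot b\cdot\hat n_i$, which by part~3 of Assumption~\ref{ass:quad} (with $v=1$) equals $\int_{\partial\hat D}b\,\hat n_i$. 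Integration by parts (the divergence theorem applied to $b\bm e_i$, using $\partial 1/\partial\hat x_i = 0$) shows these two integrals coincide, so $\bm V_q^T\bm d = \bm 0$ for $\bm d := (\bm Q^i)^T\bm 1_{N_q} - \bm E^T\bm B^i\bm 1_{N^f_q}$.

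The last step — and the one I expect to be the main obstacle — is upgrading this weak statement to $\bm d = \bm 0$, since $\bm V_q^T\bm d = \bm 0$ does not by itself force $\bm d = \bm 0$ when $N_q > N_p$. The point is that $\bm d$ lies in the range of $\bm W\bm V_q$: from $\bm P_q^T = \bm W\bm V_q\bm M^{-1}$ one gets $(\bm Q^i)^T = \bm W\bm V_q\bm M^{-1}(\bm D^i)^T\bm V_q^T\bm W$ and $\bm E^T = \bm W\bm V_q\bm M^{-1}\bm V_f^T$, so $\bm d = \bm W\bm V_q\bm z$ for some $\bm z$; then $\bm V_q^T\bm d = \bm M\bm z = \bm 0$ gives $\bm z = \bm 0$ by positive-definiteness of $\bm M$, hence $\bm d = \bm 0$. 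This closes the top block, and together with the vanishing bottom block gives $\tilde{\bm Q}^i_N\bm 1 = \bm 0$. (Alternatively, one can start from the known fact $\bm Q^i_N\bm 1 = \bm 0$ of \cite{chan2017discretely}: since $\tilde{\bm Q}^i_N$ and $\bm Q^i_N$ differ only in the top-left block, by $\frac{1}{2}(\bm E^T\bm B^i\bm E - \bm Q^i - (\bm Q^i)^T)$, the two preliminary identities reduce the claim to the same key identity.)
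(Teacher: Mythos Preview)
Your proof is correct and follows essentially the same route as the paper: expand $\tilde{\bm{Q}}^i_N\bm{1}$ block-wise, use $\bm{E}\bm{1}=\bm{1}$ and $\bm{Q}^i\bm{1}=\bm{0}$ to reduce to the identity $(\bm{Q}^i)^T\bm{1}=\bm{E}^T\bm{B}^i\bm{1}$, and then invoke Assumption~\ref{ass:quad} together with integration by parts. The only difference is in the execution of this last step: the paper argues directly at the basis level (observing that, under Assumption~\ref{ass:quad}, the $N_p$-vectors $(\bm{D}^i)^T\bm{V}_q^T\bm{W}\bm{1}$ and $\bm{V}_f^T\bm{B}^i\bm{1}$ have entries $\int_{\hat D}\partial\phi_j/\partial\hat x_i$ and $\int_{\partial\hat D}\phi_j\hat n_i$, then applies $\bm{P}_q^T$), whereas you test against an arbitrary $b\in V^N$ and close with the range-of-$\bm{W}\bm{V}_q$ argument --- a slightly more explicit treatment of the same content.
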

\begin{proof}
\bnote{
Expanding out $\tilde{\bm{Q}}^i_N\bm{1}$ yields
\[
\tilde{\bm{Q}}^i_N\bm{1} = \frac{1}{2}\begin{bmatrix}
\bm{Q}^i\bm{1} - \LRp{\bm{Q}^i}^T \bm{1} + \bm{E}^T\bm{B}^i \bm{1}\\
-\bm{B}^i\bm{E}\bm{1} + \bm{B}^i\bm{1}\end{bmatrix}.
\]
Here, $\bm{1}$ denotes the appropriate length vector with all entries equal to one.  
Since polynomials are equal to their $L^2$ projection, $\bm{E}\bm{1} = \bm{1}$ \cite{chan2017discretely,chan2018discretely}, and 
\[
-\bm{B}^i\bm{E}\bm{1} + \bm{B}^i\bm{1} = \bm{0}.  
\]
Moreover, since $\bm{Q}^i$ is a differentation matrix, $\bm{Q}^i\bm{1} = \bm{0}$, and showing $\tilde{\bm{Q}}^i_N\bm{1} = \bm{0}$ reduces to showing that 
\[
\LRp{\bm{Q}^i}^T \bm{1} = \bm{E}^T\bm{B}^i \bm{1}.
\]
However, under Assumption~\ref{ass:quad}, the entries of $\LRp{\bm{Q}^i}^T \bm{1}$ are exactly $\int_{\hat{D}} \pd{\phi_j}{\hat{\bm{x}}_i}$ and the entries of $\bm{E}^T\bm{B}^i \bm{1}$ are exactly $\int_{\partial \hat{D}} \phi_j(\bm{x}) \hat{n}_i$ since $\phi_j(\bm{x}) \in V^N$.  These two terms are then identical by the exactness of integrals and fundamental theorem of calculus. }

\qed\end{proof}
In Sections~\ref{sec:singleelem} and \ref{sec:curved}, specific polynomials $v(\bm{x})$ will be motivated by \note{the extension of the proof of entropy stability on curved elements}, and we will present examples of volume and surface quadrature rules on simplicial and tensor product elements which satisfy Assumption~\ref{ass:quad} for these choices of $v$.  

\subsection{On quadrature conditions for Assumption~\ref{ass:quad} with $v = 1$}
\label{sec:assump1}
Apart from algebraic manipulations, only Lemma~\ref{lemma:sbpcor} is necessary to prove entropy conservation in Theorem~\ref{thm:esdg}.  Lemma~\ref{lemma:sbpcor} requires that Assumption~\ref{ass:quad} holds for $v=1$.  Thus, the volume and surface quadratures must be sufficiently accurate to guarantee that the mass matrix is positive definite and to integrate 
\begin{equation}
\int_{\hat{D}} \pd{u}{x_i}, \qquad \int_{\partial \hat{D}} u \hat{n}_i. \label{eq:affineints}
\end{equation}
On simplicial elements, the mass matrix is guaranteed to be positive definite for any volume quadrature which is exact for degree $2N$ polynomial integrands.  This choice of volume quadrature also guarantees that the volume term in (\ref{eq:affineints}) is integrated exactly.  The surface quadrature can thus be taken to be any quadrature rule which is exact for only degree $N$ integrands on faces.  In contrast, the construction of simplicial decoupled SBP operators has required face quadratures which are accurate for degree $2N$ polynomials \cite{chan2017discretely, chan2018discretely}.  

On tensor product elements, we can take any degree $(2N-1)$ quadrature rule which ensures a positive definite mass matrix (e.g.\ a $(N+1)$-point GLL quadrature), as a quadrature of this accuracy is sufficient to exactly integrate the volume term in (\ref{eq:affineints}).  For the surface quadrature, we can again take any quadrature rule which is exact for degree $N$ polynomial integrands.  For example, on a quadrilateral element, one can use $\left\lceil\frac{N+1}{2}\right\rceil$-point Gauss quadrature rule or a $\left\lceil\frac{N+3}{2}\right\rceil$-point GLL rule as face quadratures for a degree $N$ scheme.

On tensor product elements, we restrict ourselves to isotropic volume quadrature rules which are construced from tensor products of one-dimensional quadrature formulas.  For the remainder of this work, the degree of the multi-dimensional quadrature rule on tensor product elements will refer to the degree of exactness of the one-dimensional rule.  For example, we refer to the quadrature rule constructed through a tensor product of one-dimensional $(N+1)$-point GLL quadrature rules as a degree $(2N-1)$ quadrature rule.  This choice of quadrature is sufficient to guarantee that the mass matrix is positive definite \cite{canuto2007spectral}.

\subsection{On the accuracy of \bnote{skew-hybridized SBP operators }}
\label{sec:accskew}

It was shown in \cite{chan2017discretely, chan2018efficient} that the \bnote{hybridized SBP operator $\bm{Q}^i_N$} can be interpreted as augmenting a volume approximation of the derivative with boundary correction terms.  Let $f(\bm{x}),g(\bm{x})$ denote two $L^2$ integrable functions, and let $\bm{f}_N, \bm{g}_N$ denote the vectors of values of $f,g$ at both volume and surface quadrature points.  A degree $N$ approximation $u\in V^N$ to $f\pd{g}{x_i}$ can be constructed via
\begin{equation}
\bm{M}\bm{u} = \LRs{\begin{array}{c}
\bm{V}_q \\ \bm{V}_f\end{array}}^T \diag{\bm{f}_N}\bm{Q}^i_N\bm{g}_N, \qquad \bm{f}_N = \begin{bmatrix} \bm{f}_q\\ \bm{f}_f\end{bmatrix}, \quad \bm{g}_N = \begin{bmatrix} \bm{g}_q\\ \bm{g}_f\end{bmatrix}, 
\label{eq:dsbpmatrixform}
\end{equation}
where $\bm{u}$ denotes the vector of coefficients for $u$.  

This algebraic expression (\ref{eq:dsbpmatrixform}) can be reinterpreted as a quadrature approximation of a variational problem, which can be mapped to a physical element $D^k$.  We seek to approximate $f\pd{g}{x_i}$ by $u \in V^N$ such that, $\forall v\in V^N$
\begin{equation}
\int_{D^k} u v = \int_{D^k} f\pd{\Pi_N g}{{x}_i}v + \int_{\partial {D}^k} \LRp{g - \Pi_Ng} \LRp{\frac{fv + \Pi_N(fv)}{2}} {n}^k_i,\label{eq:var}
\end{equation}
where $\Pi_N$ is the $L^2$ projection operator (\ref{eq:l2proj}).  
Integrating half of the volume term by parts yields the skew-symmetric form of (\ref{eq:var})
\begin{align}
\int_{D^k} u  v &=\frac{1}{2} \int_{{D^k}} \LRp{f\pd{\Pi_N g}{{x}_i}v - g \pd{\Pi_N\LRp{fv}}{{x}_i}} \label{eq:skewDN}\\
&+\frac{1}{2} \int_{\partial {D}^k} \LRp{fgv + \LRp{g - \Pi_Ng} \LRp{{fv + \Pi_N(fv)}{}}} {n}^k_i \qquad \forall v\in V^N\nonumber,
\end{align}
which yields a matrix formulation \bnote{involving the skew-hybridized SBP operator $\tilde{\bm{Q}}^i_N$}
\begin{align}
\bm{M}\bm{u} =& \frac{1}{2}\LRs{\begin{array}{c}
\bm{V}_q \\ \bm{V}_f\end{array}}^T \diag{\bm{f}_N}\underbrace{\LRp{\bm{Q}^i_N - \LRp{\bm{Q}^i_N}^T + \bm{B}^i_N}}_{\tilde{\bm{Q}}^i_N}\bm{g}_N \label{eq:dsbpmatrixform2}.
\end{align}

The accuracy of the formulation (\ref{eq:esdgSkew}) can be understood by analyzing the degree of polynomial exactness of (\ref{eq:dsbpmatrixform2}) as an approximation of the derivative.  Let $u(\bm{x})$ be a polynomial of degree $\leq N$ with coefficients $\bm{u}$, and let $\bm{u}_N = \LRs{\bm{u}_q, \bm{u}_f}^T$ denote the values of $u(\bm{x})$ at volume and surface quadrature points.  An approximation of $\pd{u}{x_i}$ can be computed by applying (\ref{eq:dsbpmatrixform}) to compute
\begin{align}
\pd{u}{x_i}\approx 
\bm{M}^{-1}\LRs{\begin{array}{c}
\bm{V}_q \\ \bm{V}_f\end{array}}^T \tilde{\bm{Q}}^i_N\bm{u}_N.
\label{eq:dsbpapprox}
\end{align}
From (\ref{eq:dsbpapprox}), it can be shown that when $\bm{Q}^i$ satisfies a generalized SBP property, the hybridized SBP operator (\ref{eq:dsbp}) produces a degree $N$ approximation to the derivative \cite{chan2017discretely}.  When $\bm{Q}^i$ does not satisfy a generalized SBP property, we have the following lemma on the accuracy of (\ref{eq:dsbpapprox}):
\begin{lemma}
Let $M\leq N$.  Suppose that the volume quadrature is exact for degree $M+N-1$ polynomials on simplices, or for polynomials in $Q^{M+N-1,M+N,M+N}$ on tensor product elements.  Furthermore, assume that the surface quadrature is exact for degree $M+N$ polynomials on simplices and $Q^{M+N,M+N}$ on tensor product elements.  \bnote{Then, so long as the mass matrix is positive definite}, the skew-symmetric approximation of the $x$-derivative (\ref{eq:dsbpapprox}) is exact for polynomials of degree $M$.  
\label{lemma:dsbpapprox}
\end{lemma}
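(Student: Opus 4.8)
The plan is to read the right-hand side of (\ref{eq:dsbpapprox}) as the quadrature discretization of the skew-symmetric variational problem (\ref{eq:skewDN}) with $f\equiv 1$ and $g=u$: in that formula the boundary correction involves $g-\Pi_N g$, which vanishes when $\deg u\le M\le N$, and the surviving continuous problem has exact solution $\pd{u}{x_i}$ by integration by parts. Since the discrete projection reproduces polynomials of degree $\le N$ — indeed $\bm P_q\bm V_q = \bm M^{-1}\bm V_q^T\bm W\bm V_q = \bm M^{-1}\bm M = \bm I$ as soon as $\bm M$ is positive definite — the discrete scheme inherits this exactness provided the volume and surface quadratures integrate the integrands that actually appear. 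The proof therefore consists of an algebraic reduction of $\LRs{\begin{array}{c}\bm V_q\\\bm V_f\end{array}}^T\tilde{\bm Q}^i_N\bm u_N$, followed by a degree count matching each integrand to the quadrature-exactness hypotheses in the statement.

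For the reduction I would expand $\tilde{\bm Q}^i_N$ using its block form (\ref{eq:skewQN}) and the definitions $\bm Q^i = \bm W\bm V_q\bm D^i\bm P_q$, $\bm E = \bm V_f\bm P_q$, $\bm B^i = \bm W_f\diag{\hat{\bm n}_i}$, together with $\bm u_q = \bm V_q\bm u$, $\bm u_f = \bm V_f\bm u$. The identities $\bm P_q\bm V_q = \bm I$, $\bm E\bm V_q = \bm V_f$, and $\bm V_q^T\bm E^T = \LRp{\bm P_q\bm V_q}^T\bm V_f^T = \bm V_f^T$ make the lower block of $\tilde{\bm Q}^i_N\bm u_N$ vanish and reduce the expression to
\[
\LRs{\begin{array}{c}\bm V_q\\\bm V_f\end{array}}^T\tilde{\bm Q}^i_N\bm u_N = \frac{1}{2}\LRp{\bm V_q^T\bm Q^i\bm u_q - \bm V_q^T\LRp{\bm Q^i}^T\bm u_q + \bm V_f^T\bm B^i\bm u_f}.
\]
The first term equals $\bm V_q^T\bm W\bm V_q\bm D^i\bm u = \bm M\bm D^i\bm u$, i.e. $\bm M$ times the coefficient vector of $\pd{u}{x_i}$, with no quadrature error whatsoever. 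It remains only to show the other two terms sum to $\bm M\bm D^i\bm u$ as well.

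Here I would argue entry by entry. Using that $\bm D^i$ maps coefficients to coefficients of the derivative, the $j$th entries of $\bm V_q^T\LRp{\bm Q^i}^T\bm u_q$ and $\bm V_f^T\bm B^i\bm u_f$ are the quadrature sums $\sum_m w_m\,\pd{\phi_j}{x_i}(\hat{\bm x}_m)\,u(\hat{\bm x}_m)$ and $\sum_m w^f_m\,\phi_j(\hat{\bm x}^f_m)\,\hat n_i(\hat{\bm x}^f_m)\hat J_f(\hat{\bm x}^f_m)\,u(\hat{\bm x}^f_m)$, i.e. the quadrature approximations of $\int_{\hat D}\pd{\phi_j}{x_i}u$ and $\int_{\partial\hat D}\phi_j u\,\hat n_i$. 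Since $\phi_j\in V^N$ and $\deg u\le M$, the integrand $\pd{\phi_j}{x_i}u$ lies in $P^{M+N-1}$ on simplices and in $Q^{M+N-1,M+N,M+N}$ on tensor product elements (the degree drops by one only in the differentiation coordinate), so it is integrated exactly by the assumed volume quadrature; and because every face of the reference element is flat, $\hat n_i\hat J_f$ is constant on each face, so $\phi_j u\,\hat n_i\hat J_f$ restricted to a face has degree at most $M+N$ (respectively lies in $Q^{M+N,M+N}$) and is integrated exactly by the assumed surface quadrature. Integration by parts on $\hat D$ then gives $-\int_{\hat D}\pd{\phi_j}{x_i}u + \int_{\partial\hat D}\phi_j u\,\hat n_i = \int_{\hat D}\phi_j\pd{u}{x_i}$, and since $\phi_j\pd{u}{x_i}$ again meets the volume-quadrature accuracy this integral also equals $\LRp{\bm M\bm D^i\bm u}_j = \sum_m w_m\,\phi_j(\hat{\bm x}_m)\pd{u}{x_i}(\hat{\bm x}_m)$. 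Thus both $\frac{1}{2}\bm V_q^T\bm Q^i\bm u_q$ and $\frac{1}{2}\LRp{-\bm V_q^T(\bm Q^i)^T\bm u_q + \bm V_f^T\bm B^i\bm u_f}$ have $j$th entry $\frac{1}{2}\int_{\hat D}\phi_j\pd{u}{x_i}$, their sum is $\LRp{\bm M\bm D^i\bm u}_j$ for every $j$, and multiplying by $\bm M^{-1}$ shows (\ref{eq:dsbpapprox}) returns exactly the coefficients of $\pd{u}{x_i}$.

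I expect the only genuine difficulty to be the bookkeeping in this last step: keeping track that on tensor product elements the lost degree is confined to the differentiation coordinate, so the relevant space is the anisotropic $Q^{M+N-1,M+N,M+N}$ rather than an isotropic $Q^{M+N-1}$; that the surface integrand picks up only the factor $\hat n_i\hat J_f$, which must be checked constant on each flat reference face so that no extra degree is incurred; and that the estimate is run against each basis function $\phi_j$ individually (so only degree $\le N$ enters on the ``test'' side) rather than against an arbitrary $v\in V^N$, since the latter would force degree-$2N$-type quadrature and lose the sharpness of the statement. It is also worth recording explicitly that positive-definiteness of $\bm M$ is used twice — once to invert $\bm M$ in (\ref{eq:dsbpapprox}), and once to secure the reproduction identity $\bm P_q\bm V_q = \bm I$, which is precisely what lets the discrete projections in (\ref{eq:dsbpmatrixform2}) play the role of the exact $\Pi_N$.
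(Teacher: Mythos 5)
Your proposal is correct and follows essentially the same route as the paper: the same algebraic reduction (the lower block of $\tilde{\bm{Q}}^i_N\bm{u}_N$ vanishes because $\bm{u}_f=\bm{E}\bm{u}_q$ for polynomial $u$, and $\bm{M}^{-1}\bm{V}_q^T\bm{Q}^i\bm{u}_q=\bm{D}^i\bm{u}$ is quadrature-free), followed by the same integration-by-parts identity with identical degree counts against a degree-$N$ test function. The only difference is cosmetic: you verify the residual entrywise against each basis function $\phi_j$, whereas the paper tests the remaining terms against the error polynomial $e\in V^N$ itself and concludes from $\bm{e}^T\bm{M}\bm{e}=0$ and positive-definiteness of $\bm{M}$.
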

\begin{proof}
\bnote{Suppose $u \in V^M$.  Let $\bm{u}$ denote the polynomial coefficients of $u$, and let $\bm{e}$ denote the difference between $\bm{D}^i\bm{u}$ (the exact coefficients of $\pd{u}{x_i}$) and the approximation (\ref{eq:dsbpapprox})
\[
\bm{e} = \bm{D}^i\bm{u} - \bm{M}^{-1}\LRs{\begin{array}{c}
\bm{V}_q \\ \bm{V}_f\end{array}}^T\tilde{\bm{Q}}^i_N\bm{u}_N
\]
where $\bm{e}$ is a polynomial of degree $N$.  
Since $u(\bm{x})$ is polynomial, the values of $u(\bm{x})$ at quadrature points are $\bm{u}_q = \bm{V}_q\bm{u}$ and $\bm{u} = \bm{P}_q\bm{u}_q$.  This implies that $\bm{u}_f = \bm{V}_f \bm{P}_q\bm{u}_q =  \bm{E}\bm{u}_q$.  Expanding the latter term yields 
\begin{align*}
\bm{M}^{-1}\LRs{\begin{array}{c}
\bm{V}_q \\ \bm{V}_f\end{array}}^T\tilde{\bm{Q}}^i_N\bm{u}_N &= \frac{1}{2}\bm{M}^{-1}\LRs{\begin{array}{c}
\bm{V}_q \\ \bm{V}_f\end{array}}^T\begin{bmatrix}
\bm{Q}^i - \LRp{\bm{Q}^i}^T & \bm{E}^T\bm{B}^i\\
-\bm{B}^i\bm{E} & \bm{B}^i
\end{bmatrix}\begin{bmatrix}
\bm{u}_q\\
\bm{u}_f
\end{bmatrix}\\
&= \frac{1}{2}\bm{M}^{-1}\LRs{\begin{array}{c}
\bm{V}_q \\ \bm{V}_f\end{array}}^T\begin{bmatrix}
\bm{Q}^i\bm{u}_q + \LRp{\bm{E}^T\bm{B}^i\bm{E} - \LRp{\bm{Q}^i}^T}\bm{u}_q\\
\bm{B}^i\LRp{\bm{u}_f - \bm{E}\bm{u}_q}
\end{bmatrix} 
\\
&=\frac{1}{2}\bm{M}^{-1}
\bm{V}_q^T\LRp{\bm{Q}^i\bm{u}_q + \LRp{\bm{E}^T\bm{B}^i\bm{E} - \LRp{\bm{Q}^i}^T}\bm{u}_q}.
\end{align*}}
Since $\bm{Q}^i = \bm{W}\bm{V}_q\bm{D}^i\bm{P}_q$ and $\bm{M} = \bm{V}_q^T\bm{W}\bm{V}_q$, we have that $\bm{M}^{-1}\bm{V}_q^T\bm{Q}^i\bm{u}_q = \bm{D}^i\bm{u}$.  This simplifies the expression for error to
\begin{align}
\bm{e}^T\bm{M}\bm{e} &= \frac{1}{2}
\bm{e}_q^T\LRp{-\bm{Q}^i\bm{u}_q + \LRp{\bm{E}^T\bm{B}^i\bm{E} - \LRp{\bm{Q}^i}^T}\bm{u}_q} \label{eq:skewerr}\\
&= -\frac{1}{2}
\bm{e}_q^T\bm{Q}^i\bm{u}_q + \frac{1}{2}\bm{e}_q^T\LRp{\bm{E}^T\bm{B}^i\bm{E} - \LRp{\bm{Q}^i}^T}\bm{u}_q, \nonumber
\end{align}
where we have introduced $\bm{e}_q = \bm{V}_q\bm{e}$.  
Since $u \in V^M$ and $e \in V^N$, by exactness of the quadrature rules, 
\[
\bm{e}_q^T\LRp{\bm{E}^T\bm{B}^i\bm{E} - \LRp{\bm{Q}^i}^T}\bm{u}_q = \int_{\partial \hat{D}} u e n_i - \int_{\hat{D}} u \pd{e}{x_i}
= \int_{\hat{D}} \pd{u}{x_i} e = \bm{e}_q^T\bm{Q}^i\bm{u}_q.
\]
Combining this with (\ref{eq:skewerr}) implies that $\bm{e}^T\bm{M}\bm{e} = 0$, and since $\bm{M}$ is symmetric positive definite, $\bm{e} = 0$.  
\qed\end{proof}


Lemma~\ref{lemma:dsbpapprox} suggests that, when a generalized SBP property does not hold, the use of under-integrated quadratures results in a loss of one or more orders of accuracy.  For example, if the SBP property does not hold, then using $(N+1)$ point GLL rules (which are exact for only polynomials of degree $2N-1$) for either volume or surface quadratures should result in a loss of one order of accuracy compared to the use of $(N+1)$-point Gauss rules (which are exact for polynomials of degree $2N$).  This is indeed observed in numerical experiments.

\section{Skew-symmetric entropy conservative formulations on mapped elements}
\label{sec:skew2}

We now construct skew-symmetric formulations on mapped elements.  We assume some domain $\Omega$ is decomposed into non-overlapping elements $D^k$, such that $D^k$ is the image of the reference element $\hat{D}$ under an isoparametric mapping $\bm{\Phi}^k$.  We define geometric change of variables terms ${G}^k_{ij}$ as scaled derivatives of reference coordinates $\hat{\bm{x}}$ w.r.t.\ physical coordinates $\bm{x}$
\begin{gather}
\pd{u}{x_i} = \sum_{j=1}^d {G}^k_{ij}\pd{u}{\hat{x}_j}, \qquad {G}^k_{ij} = J^k\pd{\hat{x}_j}{{x}_i}, 
\label{eq:geofacs}
\end{gather}
where $J^k$ is the determinant of the Jacobian of the geometric mapping on the element $D^k$.  We also introduce the scaled outward normal components $n_iJ^k_f$, which can be computed in terms of (\ref{eq:geofacs}) and the reference normals $\hat{\bm{n}}$ on $\hat{D}$
\begin{gather}
n^k_i J^k_f = \sum_{j=1}^d G^k_{ij} \hat{{n}}_j.  
\label{eq:normals}
\end{gather}
We also define $\bm{n}^k_i$ as the vector containing concatenated values of the scaled outward normals $n^k_iJ^k_f$ at surface quadrature nodes.  For the remainder of the work, we assume that the mesh is watertight or \note{``well-constructed'' \cite{kopriva2016geometry, chan2018discretely, kopriva2019free}} such that at all points on any internal face, the scaled outward normals $n^k_iJ^k_f$ on the two elements sharing this face are equal and opposite.  

As shown in the previous section, on a single element (and on affine meshes), it is possible to guarantee entropy stability of the skew-symmetric formulation (\ref{eq:esdgSkew}) under a surface quadrature which is only exact for degree $N$ polynomials.  However, on curved meshes, stronger conditions are required to guarantee entropy stability.  This is due to the fact that the geometric terms are now high order polynomials which vary spatially over each element.  Moreover, Lemma~\ref{lemma:sbpcor} assumes affine geometric mappings, and does not hold on curved elements.  In this section, we discuss how to extend Lemma~\ref{lemma:sbpcor} to curved simplicial and tensor product elements.  

\subsection{Curved elements and the geometric conservation law}
\label{sec:curved}

In this section, we describe how to construct appropriate hybridized SBP operators on curved meshes, and give conditions on the volume and surface quadrature rules under which a semi-discretely entropy stable scheme can be constructed.


We first show how to construct appropriate SBP operators on curved elements.  Let $\bm{G}^k_{ij}$ denote the vector of scaled geometric terms ${G}^k_{ij}$ evaluated at both volume and surface quadrature points, and let $\tilde{\bm{Q}}^j_N$ now denote the skew-symmetric construction of the hybridized SBP operator for the $j$th reference coordinate.  Hybridized SBP operators on a curved element $D^k$ can be defined as in \cite{chan2018discretely} by
\begin{equation}
\bm{Q}^i_k = \frac{1}{2}\sum_{j=1}^d \LRp{\diag{\bm{G}^k_{ij}}\tilde{\bm{Q}}^j_N + \tilde{\bm{Q}}^j_N\diag{\bm{G}^k_{ij}}}.
\label{eq:dncurved}
\end{equation}
\bnote{Since $\tilde{\bm{Q}}^j_N$ satisfies a summation by parts property on the reference element $\hat{D}$, then $\bm{Q}^i_k$ satisfies an analogous SBP property on the physical element $D^k$ \cite{chan2018discretely}. }

We can now construct and prove entropy conservation and free stream preservation for a skew-symmetric formulation on a physical curved element $D^k$.  \bnote{Free stream preservation is necessary to discretely preserve both entropy conservation and the condition that constant solutions are stationary solutions of systems of conservation laws.  However, on curved meshes, the presence of spatially varying geometric terms can result in the production of spurious transient waves.  The construction of geometric terms through (\ref{eq:iconscurl}) guarantees that the resulting methods are free-stream preserving, and that constant solutions remain stationary solutions of discretizations of (\ref{eq:nonlineqs}).  }

Let $\bm{Q}^i_k$ be given by (\ref{eq:dncurved}), and define the curved mass matrix 
\[
\bm{M}^k = \bm{V}_q^T\bm{W}\diag{\bm{J}^k}\bm{V}_q.
\]
Note that $\bm{M}^k$ is positive-definite so long as $J^k$ is positive at all quadrature points.  We define the auxiliary quantities $\tilde{\bm{u}}$ 
\begin{gather*}
\bm{v}_q = \bm{v}\LRp{\bm{u}_q}, \qquad \tilde{\bm{v}} = \begin{bmatrix}
\bm{V}_q\\
\bm{V}_f
\end{bmatrix}\bm{P}^k_q\bm{v}_q, \qquad \tilde{\bm{u}} = \bm{u}\LRp{\tilde{\bm{v}}}.
\end{gather*}
where $\bm{P}^k_q = \LRp{\bm{M}^k}^{-1}\bm{V}_q^T\bm{W}\diag{\bm{J}^k}$.  Then, we have the following theorem:
\begin{theorem}
\label{thm:skewformcurved}
\bnote{Assume that $\bm{Q}^i_k\bm{1}=\bm{0}$.}
Let $\tilde{\bm{u}}_f^+$ denote the face value of the entropy-projected conservative variables $\tilde{\bm{u}}_f$ on the neighboring element.  Then, the formulation
\begin{gather}
\bm{M}^k\pd{\bm{u}_h}{t} + 
\sum_{i=1}^d \LRs{\begin{array}{cc}
\bm{V}_q \\
\bm{V}_f\end{array}}^T 2\LRp{\bm{Q}^i_k \circ \bm{F}^i_S}\bm{1} + \bm{V}_f^T \bm{W}_f \diag{\bm{n}^k_i}\LRp{\bm{f}_i^*-\bm{f}(\tilde{\bm{u}}_f)} = 0,
\label{eq:skewformcurved}\\
\LRp{\bm{F}^i_S}_{ij} = \bm{f}^i_S\LRp{\tilde{\bm{u}}_i,\tilde{\bm{u}}_j}, \qquad 1\leq i,j\leq N_q + N^f_q, \nonumber\\
\bm{f}_i^* = \bm{f}^i_S(\tilde{\bm{u}}_f^+,\tilde{\bm{u}}_f), \qquad \text{ on interior interfaces,} \nonumber
\end{gather}
is semi-discretely entropy conservative on $D^k$ such that for $\bm{u}_q = \bm{V}_q\bm{u}$,
\begin{gather*}
\bm{1}^T\bm{W}\diag{\bm{J}^k}\pd{U(\bm{u}_q)}{t} + \sum_{i=1}^d\bm{1}^T\bm{W}_f \diag{\bm{n}^k_i} \LRp{\psi_i(\tilde{\bm{u}}_f) - \tilde{\bm{v}}_f^T\bm{f}_i^*} = 0.
\end{gather*}
Additionally, the method is free-stream preserving such that $\pd{\bm{u}_h}{t} = 0$ for constant solutions.
\label{thm:esdgCurved}
\end{theorem}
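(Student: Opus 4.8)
The plan is to follow the template of Theorem~\ref{thm:esdg} and Theorem 2 of \cite{chan2017discretely}, carrying the geometric factors through, and to split the argument into the entropy balance and free-stream preservation. Two structural facts are needed at the start. First, the curved operator $\bm{Q}^i_k$ from (\ref{eq:dncurved}) inherits a summation-by-parts property: using $\tilde{\bm{Q}}^j_N + (\tilde{\bm{Q}}^j_N)^T = \bm{B}^j_N$, the fact that the diagonal matrices $\bm{B}^j_N$ and $\diag{\bm{G}^k_{ij}}$ commute, and the normal identity (\ref{eq:normals}), the symmetrized combination in (\ref{eq:dncurved}) yields $\bm{Q}^i_k + (\bm{Q}^i_k)^T = \bm{B}^i_k$ with $\bm{B}^i_k = \diag{[\bm{0},\ \bm{W}_f\diag{\bm{n}^k_i}]}$ (as stated just before the theorem). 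Second, the entropy projection satisfies $\bm{v}(\tilde{\bm{u}}_j) = \tilde{\bm{v}}_j$ pointwise since $\bm{u}(\cdot)$ and $\bm{v}(\cdot)$ are mutually inverse, and $\bm{P}^k_q$ reproduces constants so that $[\bm{V}_q;\bm{V}_f]\bm{P}^k_q\bm{1}=\bm{1}$. I would also record that $\bm{M}^k$ is symmetric positive definite when $J^k>0$ at the volume nodes, hence invertible.

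For the entropy balance I would test (\ref{eq:skewformcurved}) against $\bm{v}_h := \bm{P}^k_q\bm{v}_q$, so that $[\bm{V}_q;\bm{V}_f]\bm{v}_h = \tilde{\bm{v}}$. The time term reduces to $\bm{1}^T\bm{W}\diag{\bm{J}^k}\pd{U(\bm{u}_q)}{t}$ using $\bm{M}^k\bm{P}^k_q = \bm{V}_q^T\bm{W}\diag{\bm{J}^k}$ and the pointwise chain rule $\bm{v}^T\td{\bm{u}}{t}=\td{U}{t}$. The volume term becomes $\sum_i 2\tilde{\bm{v}}^T(\bm{Q}^i_k\circ\bm{F}^i_S)\bm{1}$; here I would apply the standard flux-differencing manipulation — relabel $j\leftrightarrow l$ in half the double sum, use symmetry of $\bm{F}^i_S$ and then $(\bm{Q}^i_k)_{lj} = (\bm{B}^i_k)_{jj}\delta_{jl}-(\bm{Q}^i_k)_{jl}$, so the off-diagonal part reorganizes into $\sum_{j,l}(\bm{Q}^i_k)_{jl}(\tilde{\bm{v}}_j-\tilde{\bm{v}}_l)^T\bm{f}^i_S(\tilde{\bm{u}}_j,\tilde{\bm{u}}_l)$ plus a diagonal remainder. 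The conservation property in (\ref{eq:esflux}) (valid because $\tilde{\bm{v}}_j=\bm{v}(\tilde{\bm{u}}_j)$) converts $(\tilde{\bm{v}}_j-\tilde{\bm{v}}_l)^T\bm{f}^i_S$ into $\psi_i(\tilde{\bm{u}}_j)-\psi_i(\tilde{\bm{u}}_l)$, and this double sum telescopes using $\bm{Q}^i_k\bm{1}=\bm{0}$ together with $(\bm{Q}^i_k)^T\bm{1}=\bm{B}^i_k\bm{1}$ (from the SBP property) down to a pure boundary contribution in $\psi_i(\tilde{\bm{u}}_f)$; the diagonal remainder collapses by consistency to a boundary term in $\bm{f}(\tilde{\bm{u}}_f)$. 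Finally the surface term of (\ref{eq:skewformcurved}) contributes $\tilde{\bm{v}}_f^T\bm{W}_f\diag{\bm{n}^k_i}(\bm{f}_i^*-\bm{f}(\tilde{\bm{u}}_f))$; the $\bm{f}(\tilde{\bm{u}}_f)$ contributions cancel, and what remains is the claimed semi-discrete entropy balance.

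For free-stream preservation I would take $\bm{u}_h$ to represent a constant state $\bm{c}$. Because $\bm{P}^k_q$ reproduces constants and $\bm{u},\bm{v}$ are inverse maps, $\bm{u}_q$, $\tilde{\bm{v}}$, and $\tilde{\bm{u}}$ are the corresponding constant vectors, so every block of $\bm{F}^i_S$ equals $\bm{f}_i(\bm{c})$ by consistency and $2(\bm{Q}^i_k\circ\bm{F}^i_S)\bm{1} = 2\diag{\bm{f}_i(\bm{c})}\bm{Q}^i_k\bm{1}=\bm{0}$; on interior interfaces the neighbour state is also $\bm{c}$, so $\bm{f}_i^* = \bm{f}^i_S(\bm{c},\bm{c})=\bm{f}_i(\bm{c})=\bm{f}(\tilde{\bm{u}}_f)$ and the surface term vanishes too. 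Hence $\bm{M}^k\pd{\bm{u}_h}{t}=\bm{0}$, and invertibility of $\bm{M}^k$ gives $\pd{\bm{u}_h}{t}=\bm{0}$. Here the hypothesis $\bm{Q}^i_k\bm{1}=\bm{0}$ plays the role of a discrete geometric conservation law, and the construction referenced in (\ref{eq:iconscurl}) is what ultimately guarantees it.

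I expect the main obstacle to be the flux-differencing bookkeeping in the volume term: one must split $2\tilde{\bm{v}}^T(\bm{Q}^i_k\circ\bm{F}^i_S)\bm{1}$ cleanly using symmetry of $\bm{F}^i_S$ and the SBP decomposition $(\bm{Q}^i_k)^T = \bm{B}^i_k-\bm{Q}^i_k$, keep track of the diagonal (consistency) contributions, and confirm that both $\bm{Q}^i_k\bm{1}=\bm{0}$ and $(\bm{Q}^i_k)^T\bm{1}=\bm{B}^i_k\bm{1}$ are available so the telescoping sum collapses to boundary data — this is exactly where the geometric-conservation hypothesis is used and where the curved geometry enters through $\bm{B}^i_k$ and (\ref{eq:normals}). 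The remaining steps are routine linear algebra and relabeling, structurally identical to the proof of Theorem~\ref{thm:esdg}.
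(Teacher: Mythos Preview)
Your proposal is correct and follows essentially the same approach as the paper: the paper omits the entropy-conservation argument entirely, stating that it is identical to the proofs in \cite{chan2017discretely, chan2018discretely}, and you have spelled out precisely those steps (testing with the projected entropy variables, the flux-differencing/SBP manipulation, and telescoping via $\bm{Q}^i_k\bm{1}=\bm{0}$). Your free-stream argument is exactly the one the paper sketches, namely that $\bm{F}^i_S$ is constant for constant states and $\bm{Q}^i_k\bm{1}=\bm{0}$ kills the volume term while consistency kills the surface term.
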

\bnote{We omit the proof of entropy conservation, since it is identical to the proofs in \cite{chan2017discretely, chan2018discretely}.  Free-stream preservation follows directly from $\bm{Q}^i_k\bm{1} = \bm{0}$ and the fact that $\bm{F}_S$ is constant for constant solutions \cite{kopriva2006metric}. }


\bnote{The proof of Theorem~\ref{thm:esdgCurved} requires $\bm{Q}^i_k\bm{1} = \bm{0}$.  For curved elements, additional steps must also be taken to ensure this condition.  Assuming Assumption~\ref{ass:quad} holds for $v(\bm{x}) = 1$ and expanding out the expression for $\bm{Q}^i_k\bm{1} = \bm{0}$ using (\ref{eq:dncurved}) yields}
\begin{align}
\bm{Q}^i_k \bm{1} = \frac{1}{2} \sum_{j=1}^d \diag{\bm{G}^k_{ij}}\tilde{\bm{Q}}^j_N \bm{1} + \tilde{\bm{Q}}^j_N\diag{\bm{G}^k_{ij}}\bm{1} = \frac{1}{2}\sum_{j=1}^d \tilde{\bm{Q}}^j_N\LRp{\bm{G}^k_{ij}} = 0,
\label{eq:dgcl}
\end{align}
\bnote{where we have used that $\tilde{\bm{Q}}^j_N \bm{1} = 0$ using Lemma~\ref{lemma:sbpcor}.}  We refer to the condition $\bm{Q}^i_k\bm{1} = 0$ as the discrete geometric conservation law (GCL) \cite{thomas1979geometric, kopriva2006metric}.  For degree $N$ isoparametric mappings, the GCL is automatically satisfied in two dimensions due to the fact that the exact geometric terms ${G}^k_{ij}$ are polynomials of degree $N$ \cite{kopriva2006metric}.  However, in three dimensions, the GCL is not automatically satisfied due to the fact that the degree of $G^k_{ij}$ is larger than $N$.  Thus, the geometric terms cannot be represented exactly using degree $N$ polynomials, and (\ref{eq:dgcl}) must be enforced through an alternative construction of ${G}^k_{ij}$.

To ensure that the geometric terms satisfy the GCL, we first rewrite the geometric terms as the curl of some quantity $\bm{r}^i$, but interpolate $\bm{r}^i$ before applying the curl \cite{thomas1979geometric, visbal2002use, kopriva2006metric, hindenlang2012explicit, chan2018discretely}:
\begin{gather}
\bm{r}^i = { \pd{\bm{x}}{\hat{x}_i}\times \bm{x}}, \qquad
\LRs{\begin{array}{c}
{G}^k_{1j}\\
{G}^k_{2j}\\
{G}^k_{3j}\end{array}} = \begin{bmatrix}
\LRp{-\hat{\Grad}\times I_{N_{\rm geo}}\LRp{x_3\hat{\Grad}x_2}}_j\\
\LRp{\hat{\Grad}\times I_{N_{\rm geo}}\LRp{x_3\hat{\Grad}x_1}}_j\\
\LRp{\hat{\Grad}\times I_{N_{\rm geo}}\LRp{x_1\hat{\Grad}x_2}}_j
\end{bmatrix},\label{eq:iconscurl} \\
N_{\rm geo} \leq \begin{cases}
N+1 & \text{(tetrahedra)}\\
N & \text{(hexahedra)}
\end{cases},\nonumber
\end{gather}
where $I_{N_{\rm geo}}$ denotes a degree $N_{\rm geo}$ polynomial interpolation operator with appropriate interpolation nodes.\footnote{This interpolation step must be performed using interpolation points with an appropriate number of nodes on each boundary \cite{chan2018discretely}.  These include, for example, GLL nodes on tensor product elements, and optimized interpolation nodes on non-tensor product elements \cite{hesthaven1998electrostatics, warburton2006explicit, chan2015comparison}.}  The restriction on the maximum value of $N_{\rm geo}$ ensures that $G^k_{ij} \in V^N$ (e.g. $G^k_{ij} \in P^N$ on tetrahedral elements and $G^k_{ij}\in Q^N$ on hexahedral elements), which is also necessary to guarantee (\ref{eq:dgcl}).

Because the \bnote{skew-hybridized SBP operators $\bm{Q}^i_k$} are now defined through (\ref{eq:dncurved}), Lemma~\ref{lemma:sbpcor} and the proof of entropy stability no longer hold for curved elements and must be modified.  The introduction of curvilinear meshes will impose slightly different conditions on the accuracy of the surface quadrature.  We discuss simplicial and tensor product elements separately, as differences in the natural polynomial approximation spaces will result in different assumptions for each proof.

\begin{lemma}
\label{lemma:vdsbpcurved} 
Let $D^k$ be a curved element, and let the geometric terms $G^k_{ij}$ be constructed using \rnote{(\ref{eq:iconscurl})}.  Let Assumption~\ref{ass:quad} hold for $v = 1$ and $v = G^k_{ij}$ for all $i,j = 1,\ldots, d$.  Then,
\[
\qquad \bm{Q}^i_k\bm{1} = \bm{0},
\]
\end{lemma}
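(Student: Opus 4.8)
The plan is to reduce the claim to the \emph{discrete metric identity} $\sum_{j=1}^d \tilde{\bm{Q}}^j_N\bm{G}^k_{ij} = \bm{0}$ and then to prove that identity from the weak accuracy conditions of Assumption~\ref{ass:quad} applied with $v = G^k_{ij}$, together with the exact polynomial identity $\sum_{j=1}^d \partial G^k_{ij}/\partial \hat{x}_j \equiv 0$ that is built into the construction (\ref{eq:iconscurl}) (the divergence of a curl vanishes; in two dimensions this holds automatically since the degree-$N$ geometric terms are exact). This mirrors the proof of Lemma~\ref{lemma:sbpcor}, which is the special case $G^k_{ij}=1$.

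First I would expand $\bm{Q}^i_k\bm{1}$ using (\ref{eq:dncurved}). Since Assumption~\ref{ass:quad} holds for $v=1$, Lemma~\ref{lemma:sbpcor} gives $\tilde{\bm{Q}}^j_N\bm{1}=\bm{0}$, which kills the terms $\diag{\bm{G}^k_{ij}}\tilde{\bm{Q}}^j_N\bm{1}$ and leaves $\bm{Q}^i_k\bm{1} = \tfrac12\sum_{j}\tilde{\bm{Q}}^j_N\bm{G}^k_{ij}$, as in (\ref{eq:dgcl}). The restriction on $N_{\mathrm{geo}}$ in (\ref{eq:iconscurl}) guarantees $G^k_{ij}\in V^N$, so its values at surface quadrature points are the exact extrapolation of its volume values, $(\bm{G}^k_{ij})_f = \bm{E}(\bm{G}^k_{ij})_q$; feeding this into the block form (\ref{eq:skewQN}) annihilates the lower block of $\tilde{\bm{Q}}^j_N\bm{G}^k_{ij}$, so everything reduces to showing that the upper block
\[
\bm{z} \;:=\; \sum_{j=1}^d \LRp{\bm{Q}^j + \bm{E}^T\bm{B}^j\bm{E} - \LRp{\bm{Q}^j}^T}(\bm{G}^k_{ij})_q
\]
is the zero vector.

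The crucial point is that $\bm{z}$ lies in $\operatorname{range}(\bm{W}\bm{V}_q)$, since each of $\bm{Q}^j$, $\bm{E}^T\bm{B}^j\bm{E}$ and $\LRp{\bm{Q}^j}^T$ factors through $\bm{W}\bm{V}_q$ on the left. Writing $\bm{z}=\bm{W}\bm{V}_q\bm{c}$ and using $\bm{V}_q^T\bm{z}=\bm{M}\bm{c}$ with $\bm{M}$ invertible, it suffices to check $(\bm{V}_q\bm{w})^T\bm{z}=0$ for every $w\in V^N$. Because $\bm{Q}^j$ (resp.\ $\bm{E}^T\bm{B}^j\bm{E}$) sends the nodal values of a polynomial to the quadrature-weighted nodal values of its $\hat{x}_j$-derivative (resp.\ of its boundary trace scaled by $\hat{n}_j$), this pairing is the sum over $j$ of the quadrature approximations of $\int_{\hat{D}} w\,\partial_{\hat{x}_j}G^k_{ij}$, of $\int_{\partial\hat{D}} w\,G^k_{ij}\,\hat{n}_j$, and of $-\int_{\hat{D}} G^k_{ij}\,\partial_{\hat{x}_j}w$. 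Summing the first term over $j$ gives the quadrature approximation of $\int_{\hat{D}} w\,\big(\sum_j \partial_{\hat{x}_j}G^k_{ij}\big)$, whose integrand is the identically zero polynomial, so this contribution is exactly $0$ with no accuracy assumption needed; the remaining two terms are integrated exactly, since this is precisely part~(3) and part~(2) of Assumption~\ref{ass:quad} with the fixed polynomial taken to be $v=G^k_{ij}\in V^N$. Replacing them by the exact integrals and applying the divergence theorem on $\hat{D}$ collapses their sum to $\sum_j \int_{\hat{D}} w\,\partial_{\hat{x}_j}G^k_{ij} = \int_{\hat{D}} w\,\big(\sum_j \partial_{\hat{x}_j}G^k_{ij}\big) = 0$. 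Hence $(\bm{V}_q\bm{w})^T\bm{z}=0$ for all $w\in V^N$, so $\bm{z}=\bm{0}$ and therefore $\bm{Q}^i_k\bm{1}=\bm{0}$.

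The main obstacle is this last step: $\bm{z}$ cannot be tested against arbitrary vectors in $\mathbb{R}^{N_q}$, so one must first identify it as an element of $\operatorname{range}(\bm{W}\bm{V}_q)$ before pairing it only with polynomials; and one must be careful never to invoke exactness of the volume rule for $\int_{\hat{D}} w\,\partial_{\hat{x}_j}G^k_{ij}$ (which is \emph{not} part of Assumption~\ref{ass:quad}), since that term is eliminated pointwise by the exact metric identity before any quadrature is applied. The concrete quadrature rules (on simplicial versus tensor-product elements) for which Assumption~\ref{ass:quad} holds with $v = G^k_{ij}$, and hence for which this lemma applies, are then worked out separately.
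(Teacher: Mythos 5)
Your proof is correct and follows essentially the same route as the paper: the paper reduces $\bm{Q}^i_k\bm{1}=\bm{0}$ to the discrete GCL $\sum_j\tilde{\bm{Q}}^j_N\bm{G}^k_{ij}=\bm{0}$ via (\ref{eq:dgcl}) and Lemma~\ref{lemma:sbpcor}, and then argues ``analogously'' using only $G^k_{ij}\in V^N$, deferring the metric identities to the cited references. You have simply filled in the details the paper omits --- the vanishing of the lower block via $\bm{E}$-exactness on polynomials, the identification of the upper block with an element of $\operatorname{range}(\bm{W}\bm{V}_q)$, and the combination of the exact divergence-of-a-curl identity with parts (2) and (3) of Assumption~\ref{ass:quad} for $v=G^k_{ij}$ --- all of which is sound.
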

\begin{proof}
The proof of $\bm{Q}^i_k\bm{1} = \bm{0}$ is analogous to the proof of Lemma~\ref{lemma:sbpcor}.  The results follow for tensor product elements using results from \cite{kopriva2006metric} and for simplicial elements using results from \cite{chan2018discretely}.   In both cases, the proof relies only on the fact that $G^k_{ij} \in V^N$.  
\qed\end{proof}

The proof of global entropy conservation follows from summing up (\ref{eq:esdgthm}) over all elements and noting that the surface terms cancel due to the symmetry and conservation properties of the Tadmor flux (\ref{eq:esflux}) \cite{chan2017discretely}.  The entropy conservative formulations presented in this work can be made entropy stable by adding appropriate interface dissipation, such as Lax-Friedrichs or matrix-based penalization terms \cite{winters2017uniquely, chen2017entropy, chan2017discretely}.  


\begin{remark}
It is also possible to replace the curved mass matrix $\bm{M}^k$ with a more easily invertible weight-adjusted approximation while maintaining high order accuracy, entropy stability, and local conservation \cite{chan2018discretely}.  This approximation avoids the inversion of dense weighted $L^2$ mass matrices $\bm{M}^k$ on curved simplicial elements, but is generally unnecessary on tensor product elements as common choices of volume quadrature result in a diagonal (lumped) mass matrix \cite{carpenter2014entropy, parsani2016entropy, chan2018efficient}.
\end{remark}

\subsection{On quadrature conditions for Assumption~\ref{ass:quad} for $v = 1$ and $v = G^k_{ij}$}
\label{sec:quadacc}

The previous sections outline minimal conditions under which entropy stability is guaranteed under a skew-symmetric formulation and a polynomial geometric mapping.  In this section, we translate these minimial conditions into conditions on quadrature accuracy.  

Semi-discrete entropy conservation on curved meshes requires that Assumption~\ref{ass:quad} holds for $v = 1$ and $v = G^k_{ij}$.  We discuss specific choices of volume and surface quadrature for which this assumption is valid, and summarize the maximum degree $N_{\rm geo}$ of the polynomial geometric approximation under which entropy stability holds for common choices of volume and surface quadrature.  

In order to ensure that the mass matrix is positive-definite in Assumption~\ref{ass:quad}, the volume quadrature must be degree $2N$ in general on simplices.  The following lemma summarizes expected behavior for surface quadrature rules of varying order:
\begin{lemma}
Let $\hat{D}$ be a simplex with volume quadrature which is exact for degree $2N$ polynomials.  Let the surface quadrature be exact for polynomials of degree $M+N$.  Then, the skew-symmetric formulation (\ref{eq:skewformcurved}) is entropy stable for $N_{\rm geo} \leq \min\LRp{N+1,M+1}$.
\label{lemma:curvsimp}
\end{lemma}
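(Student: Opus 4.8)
The plan is to reduce the statement to the two results already established for curved elements and then carry out a short degree count. By Theorem~\ref{thm:esdgCurved}, the formulation (\ref{eq:skewformcurved}) is semi-discretely entropy conservative on each $D^k$ — and hence entropy stable once the interface flux $\bm{f}_i^*$ is augmented with a dissipative penalty (Lax--Friedrichs or matrix type) — provided $\bm{Q}^i_k\bm{1} = \bm{0}$ for $i = 1,\ldots,d$. By Lemma~\ref{lemma:vdsbpcurved}, this in turn holds whenever $G^k_{ij}\in V^N$ and Assumption~\ref{ass:quad} is valid for $v = 1$ and for $v = G^k_{ij}$, $i,j = 1,\ldots,d$. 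So the whole proof amounts to checking that, on a simplex with degree-$2N$ volume quadrature and degree-$(M+N)$ surface quadrature, these hypotheses hold exactly when $N_{\rm geo}\le\min(N+1,M+1)$.

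First I would pin down the polynomial degree of the geometric terms. When $G^k_{ij}$ is assembled through (\ref{eq:iconscurl}), it is a component of the curl of a degree-$N_{\rm geo}$ interpolant, so $G^k_{ij}$ is a polynomial of total degree at most $N_{\rm geo}-1$ on $\hat D$; in two dimensions one may instead take the exact terms, which have degree at most $N-1$. In either case $G^k_{ij}\in P^N = V^N$ precisely when $N_{\rm geo}\le N+1$, which is also the restriction already carried in (\ref{eq:iconscurl}). This delivers the $N+1$ half of the minimum together with the membership hypothesis needed by Lemma~\ref{lemma:vdsbpcurved}.

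Next I would verify the three parts of Assumption~\ref{ass:quad}. Part (1): since the volume quadrature is exact for degree $2N$ and products of two elements of $P^N$ have degree at most $2N$, the matrix $\bm{M} = \bm{V}_q^T\bm{W}\bm{V}_q$ equals the exact polynomial mass matrix and is symmetric positive definite (and $\bm{M}^k$ is positive definite under the standing assumption $J^k>0$ at the quadrature nodes). Part (2): for $v = 1$ the integrand $\partial u/\partial\hat x_j$ lies in $P^{N-1}$, and for $v = G^k_{ij}$ it lies in $P^{N-1+(N_{\rm geo}-1)} = P^{N+N_{\rm geo}-2}$; under $N_{\rm geo}\le N+1$ both degrees are $\le 2N$, so the degree-$2N$ volume rule is exact. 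Part (3): on a straight reference simplex the scaled reference normal $\hat n_j\hat J_f$ is constant on each face, so on a face $uv\hat n_j$ is a polynomial of degree $\deg(u|_{\hat f}) + \deg(v|_{\hat f})$, i.e.\ degree $N$ for $v = 1$ and degree $N + (N_{\rm geo}-1)$ for $v = G^k_{ij}$; the latter is $\le M+N$ exactly when $N_{\rm geo}\le M+1$ (the second half of the minimum), while the former needs only $M\ge 0$.

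Combining the two halves yields $N_{\rm geo}\le\min(N+1,M+1)$ as the admissible range, whence $\bm{Q}^i_k\bm{1} = \bm{0}$ by Lemma~\ref{lemma:vdsbpcurved} and entropy stability by Theorem~\ref{thm:esdgCurved} (globally, using the watertight-mesh assumption so that the interface terms telescope). The one genuinely delicate point — and the step I would double-check most carefully — is the degree bookkeeping for (\ref{eq:iconscurl}): the ``$+1$'' gain over the naive estimate comes entirely from the curl lowering the polynomial degree by one, so that $G^k_{ij}$ has degree $N_{\rm geo}-1$ rather than $N_{\rm geo}$. Everything else is routine.
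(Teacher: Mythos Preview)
Your proposal is correct and follows essentially the same route as the paper: reduce to Assumption~\ref{ass:quad} for $v=1$ and $v=G^k_{ij}$ via Theorem~\ref{thm:esdgCurved} and Lemma~\ref{lemma:vdsbpcurved}, observe that $G^k_{ij}\in P^{N_{\rm geo}-1}$ on simplices so that $G^k_{ij}\in P^N$ requires $N_{\rm geo}\le N+1$, and then count degrees to get $N_{\rm geo}\le M+1$ from the surface condition. Your version is slightly more explicit (you separately verify positive-definiteness of $\bm{M}$ and the $v=1$ cases), but the substance is identical.
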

\begin{proof}
Entropy stability holds if Assumption~\ref{ass:quad} holds for $v = 1$ and $v = G^k_{ij}$.  
Simplicial elements require $N_{\rm geo}\leq (N+1)$ in order to guarantee that $G^k_{ij}\in P^{N_{\rm geo}-1} \subset P^{N}$, which is necessary to satisfy the discrete GCL \cite{chan2018discretely}.  Then, for $u \in P^N$, $\pd{u}{\hat{x}_j}\in P^{N-1}$, the integrands in Assumption~\ref{ass:quad} are $\pd{u}{\hat{x}_j} v \in P^{N+N_{\rm geo}-2}$ and $uv n_i \in P^{N+N_{\rm geo}-1}$ for $v = G^k_{ij}$.  The volume quadrature exactly integrates the first integrand for $N_{\rm geo} \leq N+1$, while the surface quadrature exactly integrates the second integrand for $M \geq N_{\rm geo}-1$, or $N_{\rm geo} \leq M+1$.
\qed\end{proof}

The situation is more complicated for curved tensor product elements.  It was shown in Section~\ref{sec:assump1} that tensor product quadratures of degree $(2N-1)$ satisfy Assumption~\ref{ass:quad} for $v = 1$.  However, in contrast to the simplicial case, it is not immediately clear that degree $(2N-1)$ volume quadratures exactly integrate $\int_{\hat{D}} \pd{u}{\hat{x}_j}v$ for $v = G^k_{ij}$ for tensor product elements.  The difference between simplicial and tensor product elements is the polynomial space in which the derivative lies.  In contrast to the simplicial case, if $u \in Q^N$, $\pd{u}{\hat{x}_j} \not\in Q^{N-1}$.  Consider the three-dimensional case with $u, v \in Q^N$ and $i = 1$.  Then, differentiation reduces the polynomial degree in one coordinate but not others and $\pd{u}{\hat{x}_1} \in Q^{N-1,N,N}$.  As a result, $\pd{u}{\hat{x}_j}v \not\in Q^{2N-1}$, and a tensor product quadrature of degree $(2N-1)$ (in each coordinate) does not exactly integrate $\int_{\hat{D}} \pd{u}{\hat{x}_j}v$ for general $v\in Q^N$.  

We address the quadrilateral case first:
\begin{lemma}
Let $\hat{D}$ be a quadrilateral.  Suppose the volume quadrature be exact for degree $M+N$ polynomials, and that the surface quadrature be exact for polynomials of degree $M+N$.  Then, the skew-symmetric formulation (\ref{eq:skewformcurved}) is entropy stable for $N_{\rm geo} \leq \min\LRp{N,M+1}$.
\label{lemma:curvquad}
\end{lemma}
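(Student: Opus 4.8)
The plan is to deduce the result from Lemma~\ref{lemma:vdsbpcurved} together with Theorem~\ref{thm:esdgCurved}. Those two results say that the curved formulation (\ref{eq:skewformcurved}) is entropy conservative (hence entropy stable once interface dissipation is added) as soon as $\bm{Q}^i_k\bm{1} = \bm{0}$, and that the latter holds whenever the geometric terms lie in $Q^N$ and Assumption~\ref{ass:quad} is valid both for $v = 1$ and for $v = G^k_{ij}$. The $v = 1$ part (and the positive-definiteness of the mass matrix) is handled exactly as in Section~\ref{sec:assump1}: a degree-$(M+N)$ volume rule with $M\ge 0$ is certainly exact for $\int_{\hat D}\pd{u}{\hat{x}_j}$ and a degree-$(M+N)$ surface rule is exact for $\int_{\partial\hat D}u\,\hat{n}_j$, so the only real work is to track polynomial degrees in Assumption~\ref{ass:quad} for $v = G^k_{ij}$ and to pin down when $G^k_{ij}\in Q^N$.

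The structural fact that drives everything is that, on a quadrilateral, the $2\times 2$ cofactor formula for the inverse Jacobian gives $G^k_{ij} = (-1)^{i+j}\pd{x_{\sigma(i)}}{\hat{x}_{\sigma(j)}}$, where $\sigma$ swaps the two coordinate indices (for instance $G^k_{11} = \pd{x_2}{\hat{x}_2}$ and $G^k_{12} = -\pd{x_2}{\hat{x}_1}$). For a degree $N_{\rm geo}$ isoparametric map each coordinate function lies in $Q^{N_{\rm geo}}$, so $G^k_{ij}$ has degree at most $N_{\rm geo}$ in $\hat{x}_j$ and at most $N_{\rm geo}-1$ in the complementary variable $\hat{x}_{\sigma(j)}$. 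In particular $G^k_{ij}\in Q^{N_{\rm geo}}\subseteq Q^N$ exactly when $N_{\rm geo}\le N$, which also guarantees the discrete GCL in this two-dimensional setting; this is the first half of the bound.

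Next I would verify the two exactness requirements of Assumption~\ref{ass:quad} with $v = G^k_{ij}$. For the volume requirement, $\pd{u}{\hat{x}_j}$ for $u\in Q^N$ has degree $N-1$ in $\hat{x}_j$ and degree $N$ in $\hat{x}_{\sigma(j)}$, while $G^k_{ij}$ has degree $N_{\rm geo}$ in $\hat{x}_j$ and degree $N_{\rm geo}-1$ in $\hat{x}_{\sigma(j)}$, so the two one-degree reductions sit in complementary factors and the product $\pd{u}{\hat{x}_j}\,G^k_{ij}$ lies in the \emph{isotropic} space $Q^{N+N_{\rm geo}-1}$. A tensor-product volume quadrature whose one-dimensional rule is exact to degree $M+N$ integrates this exactly iff $N+N_{\rm geo}-1\le M+N$, i.e.\ $N_{\rm geo}\le M+1$. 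For the surface requirement, on the reference square $\hat{n}_j$ is a nonzero constant only on the two faces $\hat{x}_j = \pm 1$, whose tangential coordinate is precisely $\hat{x}_{\sigma(j)}$; there $G^k_{ij}$ restricts to a one-variable polynomial of degree $N_{\rm geo}-1$ and $u$ to one of degree $N$, so $u\,G^k_{ij}\,\hat{n}_j$ restricts to degree $N+N_{\rm geo}-1$, while on the other two faces $\hat{n}_j$ vanishes; a face quadrature exact to degree $M+N$ handles this under the same condition $N_{\rm geo}\le M+1$. Intersecting with $N_{\rm geo}\le N$ gives $N_{\rm geo}\le\min(N,M+1)$, and feeding this back through Lemma~\ref{lemma:vdsbpcurved} and Theorem~\ref{thm:esdgCurved} completes the argument.

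I expect the degree bookkeeping in the last paragraph to be the crux: the naive estimate using only $G^k_{ij}\in Q^{N_{\rm geo}}$ and $\pd{u}{\hat{x}_j}\in Q^N$ gives the weaker condition $N_{\rm geo}\le M$, so one genuinely needs the coordinate-by-coordinate count and the observation that the single degree lost in forming $G^k_{ij}$ is lost in the $\hat{x}_{\sigma(j)}$-direction, which is simultaneously the full-degree direction of $\pd{u}{\hat{x}_j}$ in the volume integral and the tangential direction on the faces carrying a nonzero $\hat{n}_j$. It is worth double-checking the cofactor identity and its anisotropic degree consequences, since this $\hat{x}_j$ versus $\hat{x}_{\sigma(j)}$ asymmetry is exactly what separates the quadrilateral case from the simplicial one in Lemma~\ref{lemma:curvsimp}.
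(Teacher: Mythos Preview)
Your proposal is correct and follows essentially the same approach as the paper: both reduce entropy stability to Assumption~\ref{ass:quad} for $v=1$ and $v=G^k_{ij}$, compute the anisotropic polynomial degrees of $G^k_{ij}$ via the $2\times 2$ cofactor formula, and then do the same coordinate-by-coordinate degree count for $\pd{u}{\hat{x}_j}G^k_{ij}$ (volume) and $uG^k_{ij}\hat{n}_j$ (surface) to arrive at $N_{\rm geo}\le M+1$. Your use of the permutation $\sigma$ to package the four cases into one statement is a cosmetic streamlining of the paper's explicit enumeration of $G^k_{11},G^k_{12},G^k_{21},G^k_{22}$, but the substance of the argument is identical.
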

\begin{proof}
As in Lemma~\ref{lemma:curvsimp}, entropy stability holds if Assumption~\ref{ass:quad} holds for $v = 1$ and $v = G^k_{ij}$.  The case of $v = 1$ was addressed previously, and we focus on $v = G^k_{ij}$.  We first characterize the polynomial degree of the geometric terms $G^k_{ij}$.  In contrast to the simplicial case, tensor product elements require $N_{\rm geo}\leq N$ in order to ensure that $G^k_{ij}\in Q^{N,N}$ and that the discrete GCL is satisfied \cite{kopriva2006metric}.  On a quadrilateral element with a degree $N_{\rm geo}$ geometric mapping, $G^k_{ij}$ is
\begin{align*}
G^k_{11} &= \pd{x_2}{\hat{x}_2} \in Q^{N_{\rm geo}, N_{\rm geo}-1}, \qquad G^k_{12} = -\pd{x_2}{\hat{x}_1} \in Q^{N_{\rm geo}-1, N_{\rm geo}},\\
G^k_{21} &= -\pd{x_1}{\hat{x}_2} \in Q^{N_{\rm geo}, N_{\rm geo}-1}, \qquad G^k_{22} = \pd{x_1}{\hat{x}_1}\in Q^{N_{\rm geo}-1, N_{\rm geo}}.\nonumber
\end{align*}
Since $\pd{u}{\hat{x}_1} \in Q^{N-1,N}$ and $\pd{u}{\hat{x}_2} \in Q^{N,N-1}$
\[
\pd{u}{\hat{x}_i}G^k_{ij} \in Q^{N+N_{\rm geo}-1}.
\]
The volume quadrature exactly integrates this integrand for $M \geq N_{\rm geo}-1$.

We now consider the condition in Assumption~\ref{ass:quad} on the surface integrals $\int_{\partial \hat{D}} u v \hat{n}_j$ for $v= G^k_{ij}$.  For left and right faces of the quadrilateral, $\hat{n}_2 = 0$, so this condition reduces to ensuring that the quantity $u G^k_{i1}$ is integrated exactly using quadrature for $i = 1,2$.  Since $G^k_{i1}$ are degree $N_{\rm geo}-1$ in the $\hat{x}_2$ coordinate, $G^k_{i1}$ is degree $N_{\rm geo}-1$ and $uG^k_{i1} \hat{n}_1 \in Q^{N+N_{\rm geo}-1}$ along the left and right faces.  Similarly, $uG^k_{i2} \hat{n}_2 \in Q^{N+N_{\rm geo}-1}$ along the top and bottom faces and are zero along the left and right faces.  The surface quadrature rule exactly integrates such integrands for $M \geq N_{\rm geo} - 1$, or $N_{\rm geo} \leq M+1$.
\qed\end{proof}

Existing proofs of entropy stability on quadrilaterals rely on $(N+1)$-point GLL volume and surface quadratures, which are exact for degree $2N-1$ polynomials.   The novelty of Lemma~\ref{lemma:curvquad} is that the proof holds for any combination of degree $2N-1$ volume and surface quadratures (for example, $(N+1)$-point GLL volume quadrature and an $(N-1)$-point Gauss surface quadrature).  

We now consider the three-dimensional case.  In contrast to the quadrilateral case, the GCL is not automatically satisfied for a degree $N_{\rm geo} \leq N$ geometric mapping.  Instead, GCL-satisfying geometric terms are approximated using (\ref{eq:iconscurl}).  Expanding out the expression for $G^k_{11}$ gives
\[
G^k_{11} = \pd{}{\hat{x}_3} I_{N_{\rm geo}}\LRp{{x}_3 \pd{x_2}{\hat{x}_2}} - \pd{}{\hat{x}_2} I_{N_{\rm geo}}\LRp{{x}_3 \pd{x_2}{\hat{x}_3}} \in Q^{N_{\rm geo}}.
\]
Repeating for the other geometric terms, one can show that $G^k_{ij} \in Q^{N_{\rm geo}}$ on hexahedral elements.  Thus, if $u\in Q^N$, $\pd{u}{\hat{x}_1} G^k_{i1} \in Q^{N+N_{\rm geo}-1,N+N_{\rm geo},N+N_{\rm geo}}$, and is only integrated exactly by volume quadratures of degree $(2N-1)$ for geometric degrees $N_{\rm geo}\leq (N-1)$.  Similarly, Assumption~\ref{ass:quad} does not hold under degree $(2N-1)$ surface quadratures unless $N_{\rm geo} \leq (N-1)$, due to the fact that traces of $G^k_{ij}$ are degree $N_{\rm geo}$ polynomials in each coordinate.\footnote{It is possible to  construct the geometric terms for $N_{\rm geo} = N$ using a local $H_{\rm div}$ basis where 
\[
\bm{r}^i \in Q^{N-1,N,N} \times Q^{N,N-1,N} \times Q^{N,N,N-1}.
\]  
Then, the geometric terms satisfy $\Grad \times \bm{r}^i \in Q^{N,N-1,N-1}\times Q^{N-1,N,N-1} \times Q^{N-1,N-1,N}$ with traces in $Q^{N-1}$, and Assumption~\ref{ass:quad} holds under degree $(2N-1)$ volume and surface quadrature.  This approach will be investigated in more detail in future work.  
}
We summarize these findings in the following lemma for hexahedral elements:
\begin{lemma}
Let $\hat{D}$ be a hexahedral element, with geometric terms constructed using (\ref{eq:iconscurl}).  Let the volume quadrature be exact for degree $M+N$ polynomials, and let the surface quadrature be exact for polynomials of degree $M+N$.  Then, the skew-symmetric formulation (\ref{eq:skewformcurved}) is entropy stable for $N_{\rm geo} \leq \min\LRp{N,M}$.
\label{lemma:curvquad}
\end{lemma}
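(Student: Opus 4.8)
The plan is to follow the template of Lemma~\ref{lemma:curvsimp} and its quadrilateral analogue: by Theorem~\ref{thm:esdgCurved} together with Lemma~\ref{lemma:vdsbpcurved}, entropy stability of (\ref{eq:skewformcurved}) follows once Assumption~\ref{ass:quad} is verified for $v = 1$ and for $v = G^k_{ij}$, $i,j = 1,\ldots,d$. The case $v = 1$ was already settled in Section~\ref{sec:assump1} (a tensor product volume quadrature of degree at least $2N-1$ makes $\bm{M}$ positive definite and integrates $\int_{\hat{D}}\pd{u}{\hat{x}_j}$ exactly, and any degree $N$ surface quadrature integrates $\int_{\partial\hat{D}}u\,\hat{n}_j$), so the entire content of the proof is the $v = G^k_{ij}$ case.

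First I would record the polynomial degree of the GCL-satisfying geometric terms produced by (\ref{eq:iconscurl}). As in the paragraph preceding the lemma, expanding $G^k_{11} = \pd{}{\hat{x}_3}I_{N_{\rm geo}}(x_3\,\pd{x_2}{\hat{x}_2}) - \pd{}{\hat{x}_2}I_{N_{\rm geo}}(x_3\,\pd{x_2}{\hat{x}_3})$ --- and cyclically for the remaining entries --- shows $G^k_{ij}\in Q^{N_{\rm geo}}$. The hypothesis $N_{\rm geo}\leq N$ is exactly what places $G^k_{ij}$ in $Q^N = V^N$, which is the condition Lemma~\ref{lemma:vdsbpcurved} requires in order to conclude $\bm{Q}^i_k\bm{1} = \bm{0}$; this is the first of the two constraints appearing in $\min(N,M)$.

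Next I would verify the two exactness conditions of Assumption~\ref{ass:quad} with $v = G^k_{ij}$. For the volume condition, $u\in Q^N$ gives $\pd{u}{\hat{x}_1}\in Q^{N-1,N,N}$ (and cyclically), and multiplying by $G^k_{i1}\in Q^{N_{\rm geo}}$ yields $\pd{u}{\hat{x}_1}G^k_{i1}\in Q^{N+N_{\rm geo}-1,\,N+N_{\rm geo},\,N+N_{\rm geo}}$; a tensor product quadrature of degree $M+N$ integrates this exactly iff $M+N\geq N+N_{\rm geo}$, i.e.\ $N_{\rm geo}\leq M$. For the surface condition, on each face of the hexahedron exactly one reference normal component is nonzero, so $\int_{\partial\hat{D}}u\,G^k_{ij}\,\hat{n}_j$ reduces on that face to integrating the trace of $u\,G^k_{ij}$, which lies in $Q^{N+N_{\rm geo},\,N+N_{\rm geo}}$ since the traces of $u$ and $G^k_{ij}$ are degree $N$ and degree $N_{\rm geo}$ respectively in each face coordinate; a degree $M+N$ surface quadrature is exact for this iff again $N_{\rm geo}\leq M$. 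Intersecting $N_{\rm geo}\leq N$ with $N_{\rm geo}\leq M$ gives $N_{\rm geo}\leq\min(N,M)$, finishing the argument.

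The only delicate point --- and the reason the hexahedral bound $\min(N,M)$ is one order tighter than the quadrilateral bound $\min(N,M+1)$ --- is the anisotropy of differentiation on tensor product spaces: $\pd{u}{\hat{x}_j}$ is not globally of lower degree, so one cannot absorb the derivative into a degree reduction and must instead track the full anisotropic degree, noting that the binding coordinate direction in both the volume and surface estimates is the one in which no derivative was taken. It is also worth emphasizing that the curl-form construction (\ref{eq:iconscurl}) --- rather than a direct product-rule expansion, which would land $G^k_{ij}$ in a strictly larger space --- is exactly what keeps $G^k_{ij}$ inside $Q^{N_{\rm geo}}$, so that the $N_{\rm geo}\leq N$ half of the bound is not made worse.
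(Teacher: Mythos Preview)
Your proposal is correct and follows essentially the same approach as the paper: the paper's argument (laid out in the paragraphs immediately preceding the lemma rather than in a formal proof environment) reduces entropy stability to verifying Assumption~\ref{ass:quad} for $v=1$ and $v=G^k_{ij}$, computes $G^k_{ij}\in Q^{N_{\rm geo}}$ from the curl-form construction, and then bounds the anisotropic degree of $\pd{u}{\hat{x}_j}G^k_{ij}$ and of the face traces $uG^k_{ij}$ to extract the constraints $N_{\rm geo}\leq N$ (for the GCL) and $N_{\rm geo}\leq M$ (for volume and surface exactness). Your closing remarks on why the hexahedral bound is one order tighter than the quadrilateral one, and on the role of the curl-form construction in keeping $G^k_{ij}$ inside $Q^{N_{\rm geo}}$, are a nice addition that the paper only alludes to in a footnote.
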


Most implementations on tensor product elements utilize volume and surface quadratures of either degree $(2N-1)$ or $2N$.  We summarize below for different pairings of volume and surface quadrature the maximum degree $N_{\rm geo}$ under which Assumption~\ref{ass:quad} is satisfied and entropy stability is guaranteed:
\begin{enumerate}
\item On quadrilateral elements, Assumption~\ref{ass:quad} holds for $N_{\rm geo} \leq N$ and any tensor product volume and surface quadratures of degree $(2N-1)$ 
\item On hexahedral elements, Assumption~\ref{ass:quad} holds for $N_{\rm geo} \leq N-1$ and any tensor product volume and surface quadratures of degree $(2N-1)$.  If the SBP property holds (e.g.\ for GLL quadrature, or for volume and surface quadratures of degree $2N$) then Assumption~\ref{ass:quad} holds for $N_{\rm geo} \leq N$.
\end{enumerate}

We note that the condition $N_{\rm geo} \leq N-1$ is non-standard for tensor product elements.  However, this condition is only necessary for entropy stability when $\bm{Q}^i$ does not satisfy a generalized SBP property (see Remark~\ref{remark:dsbp}).  To the author's knowledge, this setting has not been considered within the literature.

\section{Numerical experiments}
\label{sec:num}

In this section, we present two-dimensional experiments which verify the theoretical results presented and qualify the accuracy of the proposed methods.  We begin by investigating the maximum stable timestep, stability, and accuracy of the skew-symmetric formulation on triangular and quadrilateral meshes, and conclude with two-dimensional experiments on a hybrid mesh containing mixed quadrilateral and triangular elements.  

We consider numerical solutions of the 2D compressible Euler equations
\begin{align*}
\pd{\rho}{t} + \pd{\LRp{\rho u}}{x_1} + \pd{\LRp{\rho v}}{x_2} &= 0,\\
\pd{\rho u}{t} + \pd{\LRp{\rho u^2 + p }}{x_1} + \pd{\LRp{\rho uv}}{x_2} &= 0,\nonumber\\
\pd{\rho v}{t} + \pd{\LRp{\rho uv}}{x_1} + \pd{\LRp{\rho v^2 + p }}{x_2} &= 0,\nonumber\\
\pd{E}{t} + \pd{\LRp{u(E+p)}}{x_1} + \pd{\LRp{v(E+p)}}{x_2}&= 0,\nonumber
\end{align*}
where we have introduced the pressure is $p = (\gamma-1)\LRp{E - \frac{1}{2}\rho (u^2+v^2)}$ and the specific internal energy $\rho e = E - \frac{1}{2}\rho (u^2+v^2)$.  We seek entropy stability with respect to the entropy for the compressible Navier-Stokes equations \cite{hughes1986new}
\begin{equation*}
U(\bm{u}) = -\frac{\rho s}{\gamma-1},
\label{eq:entropy2d}
\end{equation*}
where $s = \log\LRp{\frac{p}{\rho^\gamma}}$ denotes the specific entropy. The mappings between conservative and entropy variables in two dimensions are given by
\begin{align*}
v_1 &= \frac{\rho e (\gamma + 1 - s) - E}{\rho e}, \qquad v_2 = \frac{\rho u}{\rho e}, \qquad v_3 = \frac{\rho v}{\rho e}, \qquad v_4 = -\frac{\rho}{\rho e}\\
\rho &= -(\rho e) v_4, \qquad \rho u = (\rho e) v_2, \qquad \rho v = (\rho e) v_3, \qquad E = (\rho e)\LRp{1 - \frac{{v_2^2+v_3^2}}{2 v_4}},
\end{align*}
where $\rho e$ and $s$ can be expressed in terms of the entropy variables as
\begin{equation*}
\rho e = \LRp{\frac{(\gamma-1)}{\LRp{-v_4}^{\gamma}}}^{1/(\gamma-1)}e^{\frac{-s}{\gamma-1}}, \qquad s = \gamma - v_1 + \frac{{v_2^2+v_3^2}}{2v_4}.
\end{equation*}

There exist several choices for entropy conservative fluxes \cite{ismail2009affordable, ranocha2018comparison,chandrashekar2013kinetic}.  We utilize the
the entropy conservative numerical fluxes given by Chandrashekar in \cite{chandrashekar2013kinetic}
\begin{align*}
&f^1_{1,S}(\bm{u}_L,\bm{u}_R) = \avg{\rho}^{\log} \avg{u},& &f^1_{2,S}(\bm{u}_L,\bm{u}_R) = \avg{\rho}^{\log} \avg{v},&\\
&f^2_{1,S}(\bm{u}_L,\bm{u}_R) = f^1_{1,S} \avg{u} + p_{\rm avg},&  &f^2_{2,S}(\bm{u}_L,\bm{u}_R) = f^1_{2,S} \avg{u},&\nonumber\\
&f^3_{1,S}(\bm{u}_L,\bm{u}_R) = f^2_{2,S},& &f^3_{2,S}(\bm{u}_L,\bm{u}_R) = f^1_{2,S} \avg{v} + p_{\rm avg},&\nonumber\\
&f^4_{1,S}(\bm{u}_L,\bm{u}_R) = \LRp{E_{\rm avg} + p_{\rm avg}}\avg{u},& &f^4_{2,S}(\bm{u}_L,\bm{u}_R) = \LRp{E_{\rm avg} + p_{\rm avg} }\avg{v},& \nonumber
\end{align*}
where the quantities $p_{\rm avg}, E_{\rm avg},  \nor{\bm{u}}^2_{\rm avg}$ are defined as
\begin{gather*}
p_{\rm avg} = \frac{\avg{\rho}}{2\avg{\beta}}, \qquad E_{\rm avg} = \frac{\avg{\rho}^{\log}}{2\avg{\beta}^{\log}\LRp{\gamma -1}}   + \frac{\nor{\bm{u}}^2_{\rm avg}}{2}, \qquad  \beta = \frac{\rho}{2p},\\
 \nor{\bm{u}}^2_{\rm avg} = 2(\avg{u}^2 + \avg{v}^2) - \LRp{\avg{u^2} +\avg{v^2}} \nonumber.
\end{gather*}
From here on, \emph{entropy conservative} refers to a scheme which uses these entropy conservative fluxes at inter-element interfaces.  We will refer to schemes which add interface dissipation as \emph{entropy stable}.  In this work, we utilize a local Lax-Friedrichs interface dissipation.  

For all convergence experiments, we compare the numerical solution to analytic solution for the isentropic vortex problem \cite{shu1998essentially} 
\begin{align*}
\rho(\bm{x},t) &= \LRp{1 - \frac{\frac{1}{2}(\gamma-1)(\beta e^{1-r(\bm{x},t)^2})^2}{8\gamma \pi^2}}^{\frac{1}{\gamma-1}}, \qquad p = \rho^{\gamma},\\
u(\bm{x},t) &= 1 - \frac{\beta}{2\pi} e^{1-r(\bm{x},t)^2}(x_2-c_2), \qquad v(\bm{x},t) = \frac{\beta}{2\pi} e^{1-r(\bm{x},t)^2}(x_2-c_2).\nonumber
\end{align*}
Here, $u, v$ are the $x_1$ and $x_2$ velocity and $r(\bm{x},t) = \sqrt{(x_1-c_1-t)^2 + (x_2-c_2)^2}$.  The following experiments utilize $c_1 = 5, c_2 = 0$ and $\beta = 5$.  

A low storage 4th order Runge-Kutta scheme \cite{carpenter1994fourth} is used for all numerical experiments.  The time-step is estimated based on formulas derived for linear advection in \cite{chan2015gpu, chan2018multi}
\[
dt = C \frac{h}{c_{\max} \max\LRc{\frac{1}{2}C_T, C_I}}
\]
where $h$ is the mesh size, $c_{\max}$ is the maximum wave-speed, $C$ is a user-defined CFL constant, and $C_T, C_I$ are constants in finite element inverse and trace inequalities.  These constants scale proportionally to $N^2$, though precise values of $C_I, C_T$ vary slightly depending on the choice of volume or surface quadrature used.  The dependence of $C_I, C_T$ on quadrature is discussed in more detail in Appendix~\ref{sec:consts}, where computed values are also given.

\subsection{Choices of volume and surface quadrature considered}
\label{sec:opts}

On quadrilaterals, we consider volume quadratures which are tensor products of one-dimensional quadrature rules, while for triangles we utilize volume quadratures from \cite{xiao2010quadrature}.  Surface quadratures are constructed face-by-face, and we refer to surface quadrature rules by the specific quadrature used over each face.  

We consider three choices of volume and surface quadrature on quadrilaterals: 
\begin{enumerate}
\item $(N+1)$ point GLL volume quadrature, $(N+1)$ point GLL surface quadrature.
\item $(N+1)$ point GLL volume quadrature, $(N+1)$ point Gauss surface quadrature,
\item $(N+1)$ point Gauss quadrature, $(N+1)$ point Gauss surface quadrature,
\end{enumerate}
On triangles, we consider two cases:
\begin{enumerate}
\item degree $2N$ volume quadrature, $(N+1)$ point Gauss surface quadrature,
\item degree $2N$ volume quadrature, $(N+1)$ point GLL surface quadrature.
\end{enumerate}

These choices can be combined to provide three different options on two-dimensional hybrid meshes of quadrilateral and triangular elements, which are motivated by balancing computational efficiency and accuracy:
\begin{enumerate}
\item Option 1: $(N+1)$ point GLL volume quadrature on quadrilaterals and $(N+1)$ point GLL surface quadrature on quadrilaterals and triangles.\label{opt:1}
\item Option 2: $(N+1)$ point GLL volume quadrature on quadrilaterals and $(N+1)$ point Gauss surface quadrature on quadrilaterals and triangles.\label{opt:2}
\item Option 3: $(N+1)$ point Gauss volume quadrature on quadrilaterals and $(N+1)$ point Gauss surface quadrature on quadrilaterals and triangles.\label{opt:3}
\end{enumerate}
All three options assume a triangular volume quadrature which is exact for all polynomials of degree $2N$ or less.  

All three options result in similar computational costs on triangles, but slight differences in computational cost and complexity on quadrilaterals.  On quadrilaterals, Option~\ref{opt:1} is the most computationally efficient option, as the formulation (\ref{thm:skewformcurved}) reduces to a standard entropy stable DG-SEM \cite{gassner2016split} or spectral collocation method \cite{carpenter2014entropy}.  Option 3 is most involved, resulting in a Gauss collocation method on quadrilaterals \cite{chan2018efficient}, and requires interpolation and two-point flux interactions between lines of volume quadrature nodes and boundary nodes.  

Option~\ref{opt:2} is slightly more expensive than Option~\ref{opt:1}, as the solution must be interpolated from GLL to Gauss nodes on the boundary.  However, this is less expensive than Option~\ref{opt:3} since volume GLL nodes include GLL boundary nodes as a subset.  This implies that the matrix $\bm{V}_f$ is sparse, such that interpolation to boundary Gauss nodes involves only nodal values at boundary GLL nodes.  Thus, Option~\ref{opt:2} requires only two-point flux computations between boundary GLL and Gauss nodes.  In contrast, the Gauss collocation scheme in Option~\ref{opt:3} computes two-point flux interactions through $\bm{f}_S$ between each boundary node and a line of volume nodes.  


\subsection{Verification of discrete entropy conservation}

We first verify that, for an entropy conservative flux and periodic domain, the spatial formulation tested against the projected entropy variables is numerically zero.  We refer to this quantity as the entropy right-hand side (RHS).   Section~\ref{sec:quadacc} outlines conditions on quadrature accuracy which guarantee that the formulations (\ref{eq:esdgSkew}) and (\ref{eq:skewformcurved}) are discretely entropy conservative.  These numerical results confirm that these conditions are tight.  

We induce a curved polynomial mapping by defining curved coordinates $\tilde{\bm{x}}$ through a mapping of Cartesian coordinates $\bm{x} \in [-1,1]^2$ 
\begin{align*}
\tilde{x}_1 &= x_1 + \alpha \cos\LRp{\frac{\pi}{2}x_1}\sin\LRp{\pi x_2}\\
\tilde{x}_2 &= x_2 + \alpha \sin\LRp{{\pi}x_1}\cos\LRp{\frac{\pi}{2} x_2},
\end{align*}
where $\alpha = 1/8$ for the following experiments.  We vary the geometric degree of this mapping from $N_{\rm geo} = 1$ to $N_{\rm geo} = N$, where $N_{\rm geo}$ denotes the polynomial degree of the geometric mapping on a quadrilateral or triangular element.  

Since Assumption~\ref{ass:quad} requires that the volume quadrature is sufficiently accurate to ensure that the mass matrix is positive-definite, we fix the volume to quadrature to be exact for degree $2N$ polynomials on triangles.  On quadrilaterals, we fix the volume quadrature to be an $(N+1)$ point GLL quadrature.  To verify the conditions given in Section~\ref{sec:quadacc}, we vary the accuracy of the 2D surface quadrature rule.  

The initial condition is taken as the $L^2$ projection of the discontinuous profile
\[
    \rho = \begin{cases}
    3 & |x-x_0| < 2.5\\
    2 & \text{ otherwise}
    \end{cases}, \qquad 
    x_0 = 7.5, \qquad u = v = 0, \qquad p = \rho^\gamma.
\]
We evolve the solution until final time $T = 1.0$ on a domain $[0,15]\times[-.5,.5]$ using the skew symmetric formulation with $N = 6$ and a CFL of $1/2$.  Table~\ref{table:ecrhs} shows the maximum entropy RHS over the duration of the simulation. We observe that for all $N_{\rm geo} \leq M+1$, the maximum entropy RHS is $O(10^{-14})$ and at the level of machine precision.  When $N_{\rm geo}> M+1$, we observe that the maximum entropy RHS increases.  We note that the case of $M= 5$ for the quadrilateral corresponds to the use of an $(N+1)$-point GLL rule for both volume and surface quadrature.  For this choice of quadrature, the skew-symmetric formulation is equivalent to the entropy stable spectral collocation or DG-SEM  methods of \cite{carpenter2014entropy, gassner2016split}.  

\begin{table}
\centering
\subfloat[Triangular mesh]{
\begin{tabular}{c||c|c|c|c|c|c|}
& $N_{\rm geo} = 1$ & $N_{\rm geo} = 2$ & $N_{\rm geo} = 3$ & $N_{\rm geo} = 4$ & $N_{\rm geo} = 5$ & $N_{\rm geo} = 6$ \\
\hline\hline
$M = 5 $ &  8.68e-14 & 9.41e-14 & 9.31e-14 & 9.10e-14& 9.92e-14 & 8.90e-14\\
\hline
$M = 3 $ &  1.01e-13 & 8.87e-14 & 8.79e-14 & 9.68e-14 & 0.00833 & 0.00967 \\
\hline
$M = 1 $ & 1.8e-13 & 1.82e-13 & 1.998 & 2.104 & 2.080 & 2.086\\
\hline
\end{tabular}
}\\
\subfloat[Quadrilateral mesh]{
\begin{tabular}{c||c|c|c|c|c|c|}
& $N_{\rm geo} = 1$ & $N_{\rm geo} = 2$ & $N_{\rm geo} = 3$ & $N_{\rm geo} = 4$ & $N_{\rm geo} = 5$ & $N_{\rm geo} = 6$ \\
\hline\hline
$M = 5$ & 1.74e-14 & 1.06e-14 & 1.31e-14 &  1.35e-14 & 1.06e-14 & 1.31e-14\\
\hline
$M = 3$ & 2.89e-14 & 2.59e-14 &  2.83e-14 & 2.43e-14 & 3.20e-05 & 3.27e-05 \\
\hline
$M = 1$ &4.68e-14 & 3.77e-14 & 0.1548 &  0.1532 & 0.1517 & 0.1517\\
\hline
\end{tabular}
}
\caption{Maximum absolute value of the entropy RHS for degree $N=6$ over $t\in [0,1]$ on triangular and quadrilateral meshes.  The volume quadrature for the quadrilateral mesh is taken to be $(N+1)$-point GLL quadrature.  The surface quadrature is taken to be a 1D GLL quadrature with a varying number of points, such that the rule is exact for degree $M+N$ polynomials.  }
\label{table:ecrhs}
\end{table}


\subsection{Hybrid quadrilateral-triangular meshes}

We conclude with experiments on a mixed mesh containing both quadrilateral and triangular elements (Figure~\ref{fig:hybridmesh}).  Figure~\ref{fig:hybriderrors} shows $L^2$ errors for the isentropic vortex computed at $T = 5$ for Options~\ref{opt:1}, \ref{opt:2}, and \ref{opt:3}.  

We observe that, in all cases, Option~\ref{opt:1} is less accurate than Options~\ref{opt:2} and \ref{opt:3}, and that Option~\ref{opt:3} achieves a rate of convergence close to the optimal $O(h^{N+1})$ rate, while Option~\ref{opt:1} typically achieves rates of convergence between $O(h^N)$ and $O(h^{N+1/2})$.  However, Option~\ref{opt:2} behaves differently depending on the order $N$.  For $N = 1$, Option~\ref{opt:2} achieves an accuracy similar to Option~\ref{opt:1}.  However, as $N$ increases, Option~\ref{opt:2} becomes more accurate.  For $N=4$, Option~\ref{opt:2} achieves the same level of error observed for Option~\ref{opt:3}.  This suggests that Lemma~\ref{lemma:dsbpapprox} may be sufficient but not necessary for full order accuracy.  These results may also differ depending on the type of interface dissipation applied \cite{hindenlang2019order}.


\begin{figure}
\centering
\subfloat{\raisebox{3em}{
\includegraphics[width=.43\textwidth]{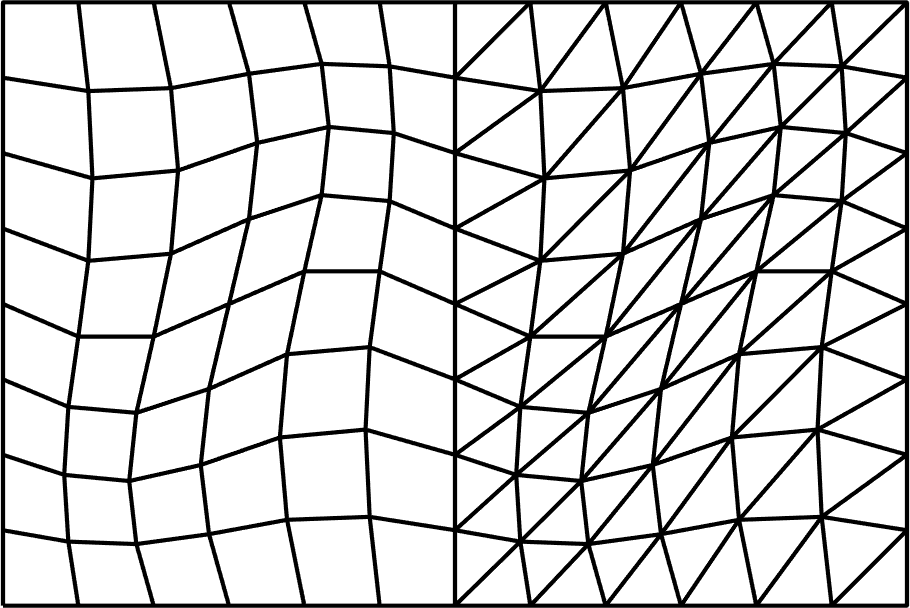}
}\label{fig:hybridmesh}}
\subfloat[Convergence for $N = 1,2,3,4$]{
\begin{tikzpicture}
\begin{loglogaxis}[
    width=.5\textwidth,
    xlabel={Mesh size $h$},
    ylabel={$L^2$ errors}, 
    xmax=.75,
    ymin=1e-5, ymax=5,
    legend pos=south east, legend cell align=left, legend style={font=\tiny},	
    xmajorgrids=true, ymajorgrids=true, grid style=dashed,
    legend entries={GLL-GLL, GLL-Gauss, Gauss-Gauss} 
]
\pgfplotsset{
cycle list={{blue, mark=*}, {red, dashed ,mark=square*},{black ,mark=triangle*}}
}
\addplot+[semithick, mark options={solid, fill=markercolor}]
coordinates{(0.333333,2.50656)(0.166667,1.874)(0.0833333,1.05647)(0.0416667,0.438926)};
\addplot+[semithick, mark options={solid, fill=markercolor}]
coordinates{(0.333333,2.42633)(0.166667,1.75142)(0.0833333,0.958902)(0.0416667,0.462137)};
\addplot+[semithick, mark options={solid, fill=markercolor}]
coordinates{(0.333333,2.09363)(0.166667,0.981664)(0.0833333,0.264928)(0.0416667,0.0536376)};
\addplot+[semithick, mark options={solid, fill=markercolor}]
coordinates{(0.333333,1.36947)(0.166667,0.390076)(0.0833333,0.0623387)(0.0416667,0.011361)};
\addplot+[semithick, mark options={solid, fill=markercolor}]
coordinates{(0.333333,1.28187)(0.166667,0.326338)(0.0833333,0.0427717)(0.0416667,0.00402484)};
\addplot+[semithick, mark options={solid, fill=markercolor}]
coordinates{(0.333333,0.859449)(0.166667,0.132239)(0.0833333,0.0187852)(0.0416667,0.0026063)};
\addplot+[semithick, mark options={solid, fill=markercolor}]
coordinates{(0.333333,0.48545)(0.166667,0.0615538)(0.0833333,0.00665061)(0.0416667,0.000711264)};
\addplot+[semithick, mark options={solid, fill=markercolor}]
coordinates{(0.333333,0.446407)(0.166667,0.0477486)(0.0833333,0.00372164)(0.0416667,0.000309705)};
\addplot+[semithick, mark options={solid, fill=markercolor}]
coordinates{(0.333333,0.286258)(0.166667,0.0331728)(0.0833333,0.00239593)(0.0416667,0.000144346)};
\addplot+[semithick, mark options={solid, fill=markercolor}]
coordinates{(0.333333,0.211329)(0.166667,0.0179798)(0.0833333,0.000816429)(0.0416667, 4.583068530777824e-05)};
\addplot+[semithick, mark options={solid, fill=markercolor}]
coordinates{(0.333333,0.160937)(0.166667,0.0110045)(0.0833333,0.00041413)(0.0416667, 1.438708299584088e-05)};
\addplot+[semithick, mark options={solid, fill=markercolor}]
coordinates{(0.333333,0.13975)(0.166667,0.00977304)(0.0833333,0.00037731)(0.0416667, 1.337566281886131e-05)};
\node at (axis cs:.5,2.5) {$N = 1$};
\node at (axis cs:.5,1.2) {$N = 2$};
\node at (axis cs:.5,.425) {$N = 3$};
\node at (axis cs:.5,.185) {$N = 4$};
\end{loglogaxis}
\end{tikzpicture}
}
\caption{Coarse hybrid mesh and  $L^2$ errors for the isentropic vortex solution for Option~\ref{opt:1}, Option~\ref{opt:2}, and Option~\ref{opt:3} for $N = 1,\ldots, 4$.}
\label{fig:hybriderrors}
\end{figure}

\section{Conclusions}

We have constructed skew-symmetric ``modal'' DG formulations of nonlinear conservation laws which are entropy stable under less restrictive conditions on quadrature accuracy.  These formulations are motivated by volume and surface quadratures which arise naturally on hybrid meshes.  \bnote{Because these quadrature rules do not induce operators which satisfy properties necessary for entropy stability, we derive new ``skew-symmetric'' operators which satisfy necessary conditions under reduced restrictions on quadrature.   We also derive a separate set of conditions relating the accuracy of the new operators and the degree of accuracy of each quadrature rule, and show that design order accuracy is recovered under the common assumptions of degree $2N-1$ volume quadratures and degree $2N$ surface quadratures. } Finally, we derive conditions under which the skew-symmetric formulation is entropy stable on curved meshes in terms of the degree of quadrature accuracy and polynomial degree of the geometric mapping.  Numerical experiments confirm the entropy stability and high order accuracy of the proposed schemes on triangular, quadrilateral, and 2D hybrid meshes.  

\section{Acknowledgments} 

The author thanks David C.\ Del Rey Fernandez for helpful discussions, \rnote{as well as the two anonymous reviewers whose comments significantly improved the readabilty of this manuscript}.  Jesse Chan is supported by the National Science Foundation under awards DMS-1719818 and DMS-1712639.  

\appendix
\section{Dependence of inverse and trace constants on quadrature}
\label{sec:consts}

The maximum stable timestep under explicit time-stepping depends on specific choices of volume and surface quadrature.  The dependence of timestep on quadrature has been documented for tensor product elements in \cite{gassner2011comparison}, where they showed that for a high order Taylor method in time, the maximum stable timestep under  $(N+1)$-point GLL volume and surface quadratures is roughly twice as large as the maximum stable timestep when volume/surface integrals are approximated using $(N+1)$ point Gauss quadratures.  

This discrepancy can be understood in terms of constants in finite element inverse and trace inequalities.  It was shown in \cite{chan2015gpu, chan2018multi} that, for linear problems, the maximum stable time-step scales inversely with the order-dependent constants $C_I, C_T$, where
\begin{align}
\int_{\hat{D}} \LRb{\Grad u}^2 \leq C_I \int_{\hat{D}} u^2, \qquad \int_{\partial \hat{D}} u^2 \leq C_T  \int_{\hat{D}} u^2, \qquad \forall u \in V^N \label{eq:consts}.
\end{align}
Here, the integrals over $\hat{D}, \partial \hat{D}$ are computed using the same volume and surface quadrature rules used for computations.  These constants can be used to bound surface integrals which appear in DG formulations, which can in turn be used to construct bounds on the spectral radius of DG discretization matrices.  The maximum stable timestep $dt_{\max}$ is thus inversely proportional to the inverse and trace constants
\[
dt_{\max} \propto C_T^{-1}, C_I^{-1}
\]
The constants $C_I, C_T$ depend on the choices of volume and surface quadrature used to evaluate each of the integrals in (\ref{eq:consts}).  It is known that $L^2$ norm computed using GLL quadrature is weaker than the full $L^2$ norm \cite{quarteroni1994introduction,canuto2007spectral}.  For the domain $\hat{D} = [-1,1]^d$ in $d$ dimensions, it can be shown that
\begin{equation}
\int_{\hat{D}} u^2 \leq \int_{\hat{D}, {\rm GLL}} u^2 \leq \LRp{2+\frac{1}{N}}^{d/2}\int_{\hat{D}} u^2 \qquad \forall u \in Q^{N},
\label{eq:gllineq}
\end{equation}
where the middle integral is under-integrated using GLL quadrature.  In other words, the discrete $L^2$ norm induced using GLL quadrature is weaker than the $L^2$ norm induced by a more accurate quadrature rule, which will be reflected in the trace and inverse constants.  

\begin{table}
\centering
\subfloat[$C_I$, quadrilateral elements]{
 \begin{tabular}{|c || c| c| c| c| c| c| c|} 
 \hline
 N & 1 & 2& 3 & 4 & 5 & 6& 7\\
 \hline\hline
Volume GLL & 2 & 12 & 37.16 & 91.67 &195.98&374.78&657.28\\
 \hline
Volume Gauss &6&30& 85.06 & 190.12 & 369.45 &652.30 &1072.75\\
 \hline
 \end{tabular}
 }\\
 \subfloat[$C_T$, quadrilateral elements]{
 \begin{tabular}{|c || c| c| c| c| c| c| c|} 
 \hline
 N & 1 & 2& 3 & 4 & 5 & 6& 7\\
 \hline\hline
Volume GLL, surface GLL & 2 & 6 & 12 & 20&30&42&56\\
 \hline
Volume Gauss, surface Gauss &6&12&20&30&42&56&72\\
 \hline
 \end{tabular}
 }\\
 \subfloat[$C_I$, triangular elements]{
 \begin{tabular}{|c || c| c| c| c| c| c| c|} 
 \hline
 N & 1 & 2& 3 & 4 & 5 & 6& 7\\
 \hline\hline
Deg.\ $2N$ vol.\ quad.\  \cite{xiao2010quadrature} &9&39.27&100.10 &213.28 &401.16 &695.48 &1127.48\\
 \hline
 \end{tabular}
 }\\
\subfloat[$C_T$, triangular elements]{
 \begin{tabular}{|c || c| c| c| c| c| c| c|} 
 \hline
 N & 1 & 2& 3 & 4 & 5 & 6& 7\\
 \hline\hline
Surface GLL & 12 & 16.14 & 20.52 & 28.12 &35.42 & 45.97 & 55.76\\
 \hline
Surface Gauss  &6&10.90&16.29& 24 & 31.88 &42.42 &52.89\\
 \hline
 \end{tabular}
 }
 \caption{Inverse and trace constants for triangular and quadrilateral elements with different quadrature configurations.}
 \label{tab:consts}
\end{table}

Table~\ref{tab:consts} shows trace and inverse constants for triangular and quadrilateral elements under several different configurations of quadrature.  For quadrilateral elements at high orders, we observe that the degree $N$ inverse constants $C_I$ under Gauss quadrature are roughly as large as the degree $(N+1)$ inverse constants under (volume) GLL quadrature.  The degree $N$ trace constants $C_T$ under Gauss quadrature are exactly equal to the degree $(N+1)$ trace constants under GLL quadrature, which was proven in \cite{chan2015gpu}.  Trace constants under GLL volume and Gauss surface quadrature are also identical to trace constants computed using GLL quadrature for both volume and surface integrals, which is a consequence of the lower bound in (\ref{eq:gllineq}).  

Several observations can be made based on the values of $C_I, C_T$ presented in Table~\ref{tab:consts}.  On quadrilaterals, the maximum stable time-step for a degree $N$ DG scheme using Gauss quadrature is expected to be smaller than that of a degree $N$ scheme using GLL quadrature, which matches observations in \cite{gassner2011comparison}.  Additionally, the maximum stable timestep under GLL volume quadrature and Gauss surface quadrature should be the same as the maximum stable timestep when GLL quadrature is used for both volume and surface integrals (e.g.\ DG-SEM).  For triangles, the maximum stable timestep should be smaller under surface GLL quadrature compared to surface Gauss quadrature.  However, we note that, while bounds on the maximum stable time-step can be derived based on the constants $C_I, C_T$ \cite{chan2015gpu, chan2018multi}, these bounds are not tight for upwind or dissipative fluxes \cite{krivodonova2013analysis}.  

%

\bibliographystyle{unsrt}
\bibliography{dg}

\end{document}